\newtheorem{theorem}{Theorem}[section]
\newtheorem{proposition}[theorem]{Proposition}
\newtheorem{lemma}[theorem]{Lemma}
\newtheorem{corollary}[theorem]{Corollary}
\newtheorem{definition}[theorem]{Definition}
\newtheorem*{remark}{Remark}
\newtheorem*{example}{Example}
\newcommand{\rn}{\mathbb{R}^{n}}
\newcommand{\re}{\mathbb{R}}
 \numberwithin{equation}{section}
\title[LARGE SOLUTIONS OF QUASI-LINEAR EQUATION ON INFINITE CYLINDERS]{BOUNDARY BLOW-UP SOLUTIONS OF SECOND ORDER QUASILINEAR EQUATION ON INFINITE CYLINDERS}
\author[I. Chowdhury]{Indranil Chowdhury}
\address{\parbox{.8\linewidth}
{{\textbf{I. Chowdhury}}\medskip \\
Indian Institute of Technology - Kanpur, India \medskip}}
\curraddr{}
\email{indranil@iitk.ac.in}
\author[N. N. Dattatreya]{N. N. Dattatreya}
\address{\parbox{.8\linewidth}
{{\textbf{N. N. Dattatreya}}\medskip \\
Indian Institute of Technology - Kanpur, India \medskip}}
\curraddr{}
\email{dattatreya21@iitk.ac.in}
\date{}
\keywords{\ Boundary blow-up solution, Asymptotic behaviour, $p$-Laplace operator, unbounded domain, infinite cylinder, Keller-Osserman condition, }
\subjclass{35B44, 35A01, 35J92, 35J62, 35J25,  
}
\begin{document}

\begin{abstract}
This article studies large solutions, for a class of Quasi-linear equations involving p-Laplacian on the \textit{infinite cylindrical domains}. We study the wellposedness of `weak' large solutions on infinite cylinders by the convergence of large solutions on finite cylinders and observe that any such solution coincides with the large solution on its cross-section. Finally, the results are generalized to a class of operators involving non-linearity in the gradient. 
\end{abstract}
\maketitle
\section{Introduction}

The focus of this paper is the existence and uniqueness of solutions for the following problem
\begin{align}\label{eqn_mainQ}
    \begin{cases}
    \text{div}(Q(|\nabla u|)\nabla u)= f(u)\quad &\text{in}~ S(\omega)\\
        u(x)\to \infty &\text{as} ~dist(x,\partial S(\omega))\to 0,
    \end{cases}
\end{align}
on the infinite cylinder $S(\omega)=\omega\times \mathbb{R}$, where $\omega$ is a $C^{2}$ domain in $\mathbb{R}^{n-1}$, $Q:(0,\infty)\to (0,\infty)$ is a $C^1$ function such that $Q(0^{+})=0$ and $(Q(r)r)'>0$ for $r>0$, and $f:[0,\infty)\to [0,\infty)$ increasing, continuous function such that $f(r)\to \infty$ as $r\to\infty$.


 In general, the existence of a solution for \eqref{eqn_mainQ} is connected with the `Keller-Osserman' condition, when $Q(r)=1$ it reads as
\begin{align*}
    \int^{\infty}\frac{ds}{\sqrt{F(s)}}<\infty,
\end{align*}
where $F$ is the primitive of $f$.
 Such solutions, whenever they exist, are called \textit{Large~Solutions}. 
 Other terminologies used in the literature are \textit{Explosive Solutions} and \textit{Boundary Blow-up Solutions}.
In connection with complex analysis, the work of L.Biberbach \cite{biberbach} in 1916 dealt with the wellposedness of similar problems for $Q(r)=1$ andandroid  $f(t)=e^{t}$ on a planer domain, and conformal map representation of the solution on a simply connected domain. Simultaneous yet independent works of Keller and Osserman around 1957 marked the beginning of studying large solutions.  Keller's work was on Electrodynamics concerning the equilibrium of charged gas particles in a container  \cite{osti_4157678, Keller1957}, and Osserman's \cite{Osserman1957} is concerned with the geometry of the ball and the Gauss curvature. Another interesting work towards the boundary blow-up solution involving the Laplace operator related to stochastic control is \cite{lions} by J. M. Lasry and P. L. Lions.

\medskip
The wellposedness of large solutions for semi-linear elliptic equations on bounded domains are well studied by now, the related developments are in e.g. \cite{bandlemarcus4, MR1240799, essen, bandlegiarrusso, Amandine1997, veronmarcus} and the references therein. 
For instance, the existence of blow-up solutions for two types of locally Lipschitz non-decreasing force function, one having zero and the other strictly positive, is studied in \cite{Amandine1997}. The articles \cite{essen,bandlegiarrusso} deal with $-\Delta u=au-b(x)f(u)$ and  $\Delta u\pm |\nabla u|^{q}=p(x)u^{\gamma}$ respectively. Whereas \cite{Costin, Lieberman} deal with a symmetric solution on a ball for $f(t)=\Tilde{f}(t)-\lambda t$.

Another interesting topic regarding the large solution is its asymptotic behaviour near the boundary, see  \cite{Costin, Payne} for the results in the unit ball and half cylinder respectively. The asymptotic behaviour of large solutions is largely studied for its dependence is not just on the operator or the force function but also on the shape of the domain's boundary, we refer \cite{bandlemarcus4, Payne, McKenna, Bandle1998OnSE, cbandle, bandlesurvay, Shuibo2010, Lieberman} for related results on bounded domains.
Papers \cite{cbandle, bandlesurvay} deal with the force functions of the form $u^{q}$ and $e^{u}$, \cite{Shuibo2010} provides the exact rate of blow-up of the solution to $\Delta u=b(x)f(u)$ on a smooth domain. The second-order effect for such is in \cite{Bandle1998OnSE}. Asymptotic on the derivative of the large solution is found in \cite{bandlemarcus3}.  The dependence of the large solution on the curvature of the boundary can be found in \cite{Bandlemarcus1, Pino}.
 Results for a non-smooth domain can be found in \cite{veronmarcus} and on fractal domains in \cite{matero}. Work regarding Logistic Equations is available in \cite{Letelier2001, FLORICA2002}. The existence of large solutions to the Monge-Ampère equation is dealt with in \cite{Materononlinear, Mohammed2007, Zhijun2018, Zhang2020}. An interesting use of a large solution is discussed in \cite{kichenassamy}.

 \medskip

The wellposedness and asymptotic of large solutions on bounded domains associated with p-Laplacian have also been explored in recent decades,  we refer \cite{materopaper, materothisis, Zongming2007} and the references therein.
In \cite{diaz1993, Du2003}, the authors studied similar results on a bounded domain for a general quasilinear equation of the form  $-div(Q(|\nabla u|\nabla u))+\lambda f(u)=g$, and, on the same note, fully nonlinear problems of the form $u-\Delta_{p}u+\beta |\nabla u|^{q}=f$ are studied in \cite{Buccheri}. 

\medskip

 If $f=0$, the Keller Osserman condition fails, rendering the non-existence of large solutions. Blowing up at the boundary isn't an inherent property of this local operator but is imposed by the boundary condition. Non-linearity tries to avert the solution from blowing up in the form of a local bound on the solution, boundary condition encourages the solution to blow up. The border at which one takes over the other is the Keller-Osserman condition.

\medskip

As discussed in \cite{Keller1957}, the non-existence of blow-up solution on the whole space $\rn$ raises a natural question: to explore similar results when one or more directions are unbounded, but not all! The answer for semi-linear equations ($Q(r)=1$) can be found in \cite{bandle2013large}. In this paper, the authors proved that such a solution on an infinite cylindrical domain exists using the convergence of a large solution on the finite cylinders becoming unbounded in particular direction(s), and proves that such a large solution coincides with the large solution on the corresponding slice of the cylinder, intuition to which is from the same problem in $\mathbb{R}$, where they have shown that the solutions $v_{\ell}$ in $(-\ell,\ell)$ converges to zero locally uniformly as $\ell\to \infty$.
\medskip
 
We aim to achieve similar existence results of \eqref{eqn_mainQ} by the approximation of the large solutions on finite cylinders becoming unbounded in particular direction(s).
For simplicity, first we carry out the analysis when $Q(r)=r^{p-2}$ for $p>1$ and on $S=S(B^{N-1}_{1}(0))$, and prove the following statements:
 \begin{itemize}
     \item [{\small{(i)}}] There exists $u\in W^{1,\infty}_{loc}(S)$ that solves \eqref{eqn_mainQ} with $Q(r)=r^{p-2}, p>1$, when $f$ satisfy \textit{Keller Osserman} type condition given in \ref{A1} bellow. 
     \item [{\small{(ii)}}] $u$ is independent of the last variable and solves a corresponding problem on $B^{n-1}_{1}(0)$ for a fixed last variable.
     \item [{\small{(iii)}}] $u$ is unique in solving \eqref{eqn_mainQ} for $Q(r)=r^{p-2}, p>1$, with an additional compatible condition on $f$.
 \end{itemize} 
 The first two results are then generalized to \eqref{eqn_mainQ} in section \ref{section 5}.

 Along the proof, we observe that $\{u_{\ell}\}$, the large solutions on finite cylinder $B^{N-1}_{1}(0) \times (-\ell, \ell)$, is a decreasing sequence of non-negative functions, and so define $u$ to be its limit. In the case of the Laplace operator, local uniform convergence of $u_{\ell}$ to $u$ pushes the limit beyond the operator as in \cite{bandle2013large}, where the authors consider point-wise solutions.  They construct \textit{Maximal large solutions}  and \textit{Minimal large solutions} on the infinite cylinder and show that both maximal and minimal large solutions of the finite cylinders converge to the maximal large solution on the infinite cylinder, which is due to the construction of the maximal large solution (see \cite{bandle2013large}), for the minimal large solution, they consider a slightly different,  finite boundary data on finite cylinders. For $p\neq 2$, our solution is weak, which facilitates the need to change the limit and the integral, where the hurdle appears due to two reasons: One is the presence of non-linearity in the gradient, and the other is the boundary condition, which doesn't allow the solution to become a test function, up to scaling and translation. 
 Although it is possible to get a local uniform convergence similar to the proof of the existence of a large solution on a bounded domain as in \cite{diaz1993}, which uses a local uniform gradient bound on domains with positive mean curvature, we avoid it since we are only concerned about the limit of $\{u_\ell\}$ being a large solution on the infinite cylinder, which can be achieved by a weaker convergence. We show a local $W^{1,p}$ convergence using a vector inequality and a suitable cutoff test function. In section \ref{section 3}, we show how the idea in the proof of blow-up control in \cite{materopaper} can be used to get a local uniform bound on the solution of \eqref{eqn_mainQ} by building a radial, blow-up super solution on balls, with the help of results from section \ref{Section 2}.

 The remaining part of this paper is organized as follows: in Section \ref{Section 2} we discuss the problem given, in dimension one. in Section \ref{section 3}, we provide definitions and basic properties. We also prove a local uniform bound for weak solutions and local $L^{p}$ norm bound for its gradient in a bounded domain. In Section \ref{section 4}, we prove the results stated above, and in Section \ref{section 5}, we generalize some of those results to \eqref{eqn_mainQ}.

\section{Analysis In One-Dimension}\label{Section 2}
This section demonstrates a one-dimensional analysis for a problem general than \eqref{eqn_mainQ}. The results are similar to \cite{bandle2013large} studied for a linear operator and also conjectured for the equation of type \eqref{eqn_1d}. This analysis provides an intuition to the asymptomatic in higher dimensions. Moreover, the solution built in this provides a way to construct a radially symmetric increasing solution for \eqref{eqn_mainQ} on balls. Similar techniques for \eqref{eqn_mainQ} are in \cite{2diaz1993}. The uniqueness of positive large solutions to the p-Laplace operator can be found in \cite{Guo2007}.

Consider the problem on a sequence of domains in $\mathbb{R}$, whose limit is the whole space.
For $\ell>0$, we take,
\begin{align}
 (A(v'))'= f(v) \quad &\text{in} \quad (-\ell,\ell) \label{eqn_1d}\\
   v(x)\rightarrow \infty \quad\quad &\text{as} \quad x\rightarrow \pm \ell. \label{eqn_1d_bdry}
   \end{align}
Where $A:\mathbb{R}\rightarrow \mathbb{R}$ is a continuously differentiable, increasing function with $A(0)=0$ and $A'(r)>0~\forall ~r>0$; further assumptions on $A$, required for the analysis, will be discussed in due course. In addition, $f:[0,\infty)\rightarrow [0,\infty)$ is a continuous function which is increasing and $f(r)>0$ for $r>0$ and $f(t)\to \infty$ as $t\to \infty$.
\begin{example} One may consider,
    \begin{enumerate}
        \item [\textit{i)}] $A(r)=|r|^{p-2}r$, which is the p-Laplacian.
        \item [\textit{ii)}] $A(r)=\frac{r}{\sqrt{1+r^{2}}}$.
    \end{enumerate}
\end{example}

\makeatletter
\newcommand{\myitem}[1]{%
\medskip \item[#1]\protected@edef\@currentlabel{#1}%
} 

\begin{enumerate}
    \myitem{$\textbf{(A1)}$} \label{A1} \textit{We assume the following Keller Osserman condition: For any $r>0$ }
    \begin{align*}
        \Psi_{A}(r):=\int_{r}^{\infty}\frac{ds}{B^{-1}\{F(s)\}}<\infty.
    \end{align*}
\end{enumerate}
 Which is denoted as $\int^{\infty}\frac{ds}{B^{-1}\{F(s)\}}<\infty$. Where \begin{align*}
    \displaystyle F(x)=\int_{0}^{x}f(s)\ ds,\quad B(x)=\int_{0}^{x}A'(s)s\ ds.
\end{align*} 
\begin{example}
     When $A(r)=|r|^{p-2}r$  we get $ \displaystyle \Psi_{p}(r)=\biggl(1-\frac{1}{p}\biggl)^{\frac{1}{p}}\int_{r}^{\infty}\frac{dx}{F(s)^{\frac{1}{p}}}<\infty$
and for  $f(t)=t^{q}$ \ref{A1} becomes $\displaystyle \int^{\infty}\frac{dx}{x^{\frac{q+1}{p}}}<\infty.
$
Therefore, $q>p-1$. 
 \end{example}
For a $v_{0}\in (0,+\infty)$, we consider the following IVP:
\begin{align}\label{eqn_1d_ivp}
    \begin{cases}
        (A(w'))'= f(w) \quad &\forall~ x>x_{m}\\
        w'(x_{m})=0 \quad \text{and} &w(x_{m})=v_{0}. 
    \end{cases}
\end{align}
Multiplying both sides by $w'$, integrating between $x_{m}<x$,
and by a change of variable 
\begin{align*}
    \int_{0}^{w'(x)}A'(s)s\ ds=\int_{v_{0}}^{w(x)}f(s)\ ds.
\end{align*}
Then we have by the above notation, $B(w'(x))=F(w(x))-F(v_{0}).$
 Which implies 
 \begin{align}\label{eqn_one dimensional analysis}
     \frac{w'(x)}{B^{-1}\{F(w(x))-F(v_{0})\}}=1.
 \end{align}
 Integrating from $x_{m}$ to $x$ with the change of variable applied to the left-hand side, we get,
 \begin{equation}\label{eqn_for_KO}
     \int_{v_{0}}^{w(x)}\frac{ ds}{B^{-1}\{F(s)-F(v_{0})\}}=x-x_{m}.
 \end{equation}
Note that  the right side of \eqref{eqn_for_KO} blows-up to $\infty$ as  $x\to \infty$,  whereas by assuming \ref{A1} the left side remains finite  as long as $v_{0}=v(x_{m})>0$. Therefore $w$ should blow up at a finite value of $x$, say $x=\ell(v_{0})$, where $(x_{m}, \ell(v_{0}))$ is the maximal interval of existence. We have 
\begin{equation}\label{eqn_KO_ell}
 \ell(v_{0})=x_{m}+\int_{v_{0}}^{\infty}\frac{ds}{B^{-1}\{F(w(x))-F(v_{0})\}} .  
\end{equation}
Tracing the steps back, any $w$ defined implicitly as in \eqref{eqn_for_KO} solves \eqref{eqn_1d_ivp}.
To see the uniqueness of solution to \eqref{eqn_1d_ivp}, assume $w_{1}$ and $w_{2}$ are two solutions, then by \eqref{eqn_for_KO}
\begin{align*}
    \int_{w_{1}(x)}^{w_{2}(x)}\frac{ ds}{B^{-1}\{F(s)-F(v_{0})\}}=0,
\end{align*}
implies $w_{1}=w_{2}$ for all $x>x_{m}$, as the integrand is positive.
To this end, we have that $w$ is the only solution to \eqref{eqn_1d_ivp} and is implicitly given by \eqref{eqn_for_KO}.

 Similarly, let $\Tilde{w}$ solve the counterpart
\begin{align*}
    \begin{cases}
        (A(w'))'=f(w) \quad &\forall~ x<x_{m}\\
        w'(x_{m})=0 \quad &w(x_{m})=v_{0},
    \end{cases}
\end{align*}
we can construct a function 
\begin{equation*}
    v(x)=\begin{cases}
        w(x) \quad \text{if } x\geq x_{m}\\
        \Tilde{w}(x) \quad \text{if } x\leq x_{m}
    \end{cases}
\end{equation*}that solves \eqref{eqn_1d} and \eqref{eqn_1d_bdry} uniquely on $(-\ell(v_{0}),\ell(v_{0}))$ and is given implicitly as in \eqref{eqn_for_KO}.

 Further whenever $x<x_{m}$ by reflection, $2x_{m}-x>x_{m}$ and so $v(2x_{m}-x)$ solve \eqref{eqn_1d_ivp}, and vice versa. We deduce that $v(2x_{m}-x)=v(x)$ for all $x$ for which $v$ is defined.
 Since the domain we are looking for is symmetrical about $0$, we are forced to consider $x_{m}=0$. Thus, $v$ blows up at $\pm \ell(v_{0})$, $v(-x)=v(x)$ for all $x\in (-\ell(v_{0}),\ell(v_{0}))$ and, is implicitly given by
\begin{equation}\label{eqn_1d_sln_implicit}
     \int_{v_{0}}^{v(x)}\frac{ ds}{B^{-1}\{F(s)-F(v_{0})\}}=x \quad \forall~ x\in (-\ell(v_{0}),\ell(v_{0})).
 \end{equation}
It is easy to see that $v$ is decreasing in $(-\ell(v_{0}),0)$ and increasing in $(0,\ell({v_{0}}))$ as $v''>0$ by chain rule. Thus we have 
\begin{align}\label{eqn:v_0}
\displaystyle v_{0}=\min_{x\in (-\ell(v_{0}),\ell(v_{0}))} v(x)=v(x_{m})
 \end{align}

 Before going further, we consider some necessary assumptions regarding the integrability of  $\int_{0}^{r}\frac{ds}{B^{-1}\{F(s)\}}$ for any $r\in (0,\infty)$, which is, denoted by $\int_{0^{+}}\frac{ds}{B^{-1}\{F(s)\}}$.

\begin{enumerate}
        \myitem{$\textbf{(A2)}$}\label{A2} $\int_{0^{+}}\frac{ds}{B^{-1}\{F(s)\}}=\infty$, which is called the $Osgood~ Condition$.
    \myitem{$\textbf{(A3)}$}\label{A3} $\int_{0^{+}}\frac{ds}{B^{-1}\{F(s)\}}<\infty$.
\end{enumerate}
First, we prove a lemma that gives non-existence of a large solution on $\re$

\begin{lemma}\label{lemma phi_0 neq 0}
Assume \ref{A2} and $v_0$ as in \eqref{eqn:v_0}, then $v_{0}\ne 0$.
\end{lemma}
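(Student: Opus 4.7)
The plan is a proof by contradiction: suppose $v_{0}=0$. The derivation running from equation \eqref{eqn_one dimensional analysis} to \eqref{eqn_for_KO} rested only on the IVP data $v'(x_{m})=0$ and $v(x_{m})=v_{0}$; strict positivity of $v_{0}$ was never used. Multiplying $(A(v'))'=f(v)$ by $v'$ and integrating from $x_{m}$ to $x\ge x_{m}$ (using $B'(r)=A'(r)r$) yields the same energy identity
\begin{equation*}
B(v'(x))=F(v(x))-F(v_{0})=F(v(x)),
\end{equation*}
since $F(0)=0$. Because $f\ge 0$ on $[0,\infty)$, $(A(v'))'\ge 0$ on $[x_{m},\ell(0))$, so $A(v')$ is non-decreasing from $A(v'(x_{m}))=0$; monotonicity of $A$ then forces $v'\ge 0$, and hence $v'(x)=B^{-1}(F(v(x)))$ on $[x_{m},\ell(0))$.

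The genuine subtlety is that the separation of variables $\frac{dv}{B^{-1}(F(v))}=dx$ is singular at points where $v=0$, so one must first isolate a subinterval on which $v$ is strictly positive. Set $x^{*}:=\sup\{x\in[x_{m},\ell(0)):v(x)=0\}$. By continuity, $v(x^{*})=0$; since $v(x)\to\infty$ as $x\to\ell(0)$, we have $x^{*}<\ell(0)$, and monotonicity of $v$ on $[x_{m},\ell(0))$ gives $v>0$ on $(x^{*},\ell(0))$.

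The final step is the separation argument on this good interval. For any $x\in(x^{*},\ell(0))$ and any small $\varepsilon>0$ with $x^{*}+\varepsilon<x$, the substitution $s=v(t)$ gives
\begin{equation*}
\int_{v(x^{*}+\varepsilon)}^{v(x)}\frac{ds}{B^{-1}(F(s))}=\int_{x^{*}+\varepsilon}^{x}\frac{v'(t)}{B^{-1}(F(v(t)))}\,dt=x-x^{*}-\varepsilon.
\end{equation*}
Letting $\varepsilon\downarrow 0$, the right-hand side stays bounded by $\ell(0)-x^{*}<\infty$, while $v(x^{*}+\varepsilon)\to v(x^{*})=0$ and monotone convergence (the integrand is positive) identify the left-hand limit with $\int_{0}^{v(x)}\tfrac{ds}{B^{-1}(F(s))}$. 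By \ref{A2}, this integral equals $+\infty$, contradicting the finite right-hand side. Hence $v_{0}\ne 0$.
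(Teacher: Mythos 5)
Your proof is correct and takes essentially the same route as the paper's: both isolate the last zero of the solution (your $x^{*}=\sup\{x: v(x)=0\}$ is the paper's $x_{0}=\min\{x: w(x)>0\}$), separate variables on the interval where the solution is strictly positive, and let the left endpoint tend to that zero so that \ref{A2} forces the integral to diverge while the right-hand side stays finite. Your write-up is merely more explicit about the energy identity, the sign of $v'$, and the monotone-convergence passage to the limit.
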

\begin{proof}
Assume the contrary, then $F(v_{0})=0$. The equation \eqref{eqn_1d_ivp} in this case has a unique solution $w=0$, because, if $w(x)>0$ for some $x$, take $x_{0}=\min\{x~|~w(x)>0\}$, then $x_{0}\geq0$, $w(x_{0})=0$ and $(x_{0},\infty)\in \{x~|~w(x)>0\}$.
\\
By \eqref{eqn_1d_sln_implicit}, with the same argument as between \eqref{eqn_1d_ivp} and \eqref{eqn_for_KO}, and integrating between $x<y$ we get,
\begin{align*}
   \displaystyle  \int_{w(x)}^{w(y)}\frac{ds}{B^{-1}\{F(s)\}}=y-x.
\end{align*}
Letting $x \to x_{0}$, we get a contradiction to \ref{A2} as the left-hand side would then become infinite, whereas the right-hand side remains finite. Hence $v_{0}\ne 0$.
\end{proof}

\begin{theorem}
With \ref{A1} and \ref{A2}, for any $\ell>0$ there exists a unique solution $v_{\ell}$ to \eqref{eqn_1d},\eqref{eqn_1d_bdry}
    with the following properties:
    \begin{itemize}
        \item[\textit{(i)}] $v_{\ell}$ is convex.
        \item [\textit{(ii)}] $v_{\ell}\to 0$ uniformly on any compact subset of $\mathbb{R}$, as $\ell\to \infty$.
    \end{itemize}
\end{theorem}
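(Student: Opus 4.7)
The plan is to leverage the implicit formula \eqref{eqn_1d_sln_implicit} derived above, so that everything reduces to analysing the map $v_0 \mapsto \ell(v_0)$ from \eqref{eqn_KO_ell} with $x_m = 0$. First I would establish that $\ell:(0,\infty)\to(0,\infty)$ is a continuous, strictly decreasing bijection. Substituting $s = v_0 + t$ gives
\[
\ell(v_0) = \int_0^\infty \frac{dt}{B^{-1}\{F(v_0+t) - F(v_0)\}}.
\]
Since $f$ is nondecreasing, $F$ is convex, so $F(v_0+t) - F(v_0)$ is nondecreasing in $v_0$ for each fixed $t$, which makes the integrand monotone in $v_0$ and yields strict monotonicity of $\ell$. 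For $v_0 \to \infty$ I would apply dominated convergence using as dominator the integrand at any fixed $v_* > 0$ (for which $\ell(v_*) < \infty$ by \ref{A1}); pointwise the integrand vanishes because $F(v_0+t) - F(v_0) \geq f(v_0)\,t \to \infty$. For $v_0 \to 0^+$ the elementary bound $F(s) - F(v_0) \leq F(s)$ gives $\ell(v_0) \geq \int_{v_0}^\infty \frac{ds}{B^{-1}\{F(s)\}}$, which blows up by \ref{A2}. Continuity of $\ell$ follows from the same dominated-convergence setup, so for each $\ell > 0$ there is a unique $v_0 = v_0(\ell)$ with $\ell(v_0) = \ell$, producing $v_\ell$.

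For uniqueness within the full class of large solutions, I would argue that any solution $v$ of \eqref{eqn_1d}--\eqref{eqn_1d_bdry} is strictly convex: $(A(v'))' = f(v) > 0$ makes $A \circ v'$ strictly increasing, and $A$ being strictly increasing forces $v'$ itself to be strictly increasing. Hence $v$ attains a unique minimum at some $x_m \in (-\ell,\ell)$ with $v'(x_m) = 0$, and the IVP analysis between \eqref{eqn_1d_ivp} and \eqref{eqn_for_KO} forces $v$ to be symmetric about $x_m$ and to blow up precisely at $x_m \pm \ell(v(x_m))$. Matching with the prescribed blow-up points $\pm \ell$ then pins down $x_m = 0$ and $v(0) = v_0(\ell)$. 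Property (i) comes out as a byproduct of this argument.

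For (ii), fix $R > 0$ and take $\ell > R$. Since $v_\ell$ is even and increasing on $[0,\ell)$, it suffices to show $v_\ell(R) \to 0$ as $\ell \to \infty$. From \eqref{eqn_1d_sln_implicit},
\[
R = \int_{v_0(\ell)}^{v_\ell(R)} \frac{ds}{B^{-1}\{F(s) - F(v_0(\ell))\}} \geq \int_{v_0(\ell)}^{v_\ell(R)} \frac{ds}{B^{-1}\{F(s)\}}.
\]
Since $v_0(\ell) \to 0$ as $\ell \to \infty$, a hypothetical lower bound $v_\ell(R) \geq \eta > 0$ along a subsequence would force the right-hand side to diverge by \ref{A2}, contradicting its equality with $R$.

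I expect the main subtlety to lie in the two limit behaviours $\ell(0^+) = \infty$ and $\ell(\infty) = 0$, since these are precisely where the Keller--Osserman-type hypotheses enter: \ref{A2} controls the behaviour at small $v_0$ through the lower bound on $\ell(v_0)$, while \ref{A1} supplies the integrable dominator needed for dominated convergence at large $v_0$. Once these two limits and the strict monotonicity and continuity of $\ell(\cdot)$ are in hand, the remaining ingredients---symmetry of $v$ about its minimum, convexity, and the uniform convergence in (ii)---follow as routine consequences of the implicit formula \eqref{eqn_1d_sln_implicit}.
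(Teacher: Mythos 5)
Your proposal is correct and follows essentially the same route as the paper: both reduce the problem to showing that $v_{0}\mapsto\ell(v_{0})$ is a continuous, strictly decreasing bijection of $(0,\infty)$ onto itself (with \ref{A1} driving $\ell(v_{0})\to 0$ and \ref{A2} driving $\ell(v_{0})\to\infty$), obtain uniqueness from the symmetry of any solution about its minimum together with the injectivity of $\ell$, and deduce \textit{(ii)} from the implicit formula \eqref{eqn_1d_sln_implicit}. The only cosmetic differences are that you invoke dominated convergence and the elementary bound $F(s)-F(v_{0})\le F(s)$ where the paper uses monotone convergence, and your contradiction argument for \textit{(ii)} is a minor variant of the paper's Step 4.
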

\begin{proof}
We prove the existence of $v_{\ell}$ for all $\ell>0$ by showing that $\ell(v_{0}):(0,+\infty)\to(0,+\infty)$, as a function of $v_{0}$, is onto, and uniqueness by showing that it is one-one.  To show that it is onto, we show that it is continuous, $\displaystyle \lim_{v_{0}\to\infty}\ell(v_{0})=0 $ and $\displaystyle \lim_{v_{0}\to 0}\ell(v_{0})=\infty $.\\
We have defined $v_{\ell}$ above. As we have observed by chain rule $v''>0$, hence $(i)$ holds. \\
\noindent \textit{\underline{Step 1:} }$\ell(v_{0})$ is onto.\\ 
 By \eqref{eqn_KO_ell} and change of variable, we get,
\begin{align*}
    \ell(v_{0})=\int_{0}^{\infty}\frac{ds}{B^{-1}\{F(v(s+v_{0}))-F(v_{0})\}},
\end{align*}  
hence $\ell$ is a continuous function of $v_{0}$. Differentiating the denominator of the integrand with respect to $v_{0}$
\begin{equation}\label{eqn_diff_wrt_phi0}
   \frac{d}{dv_{0}}\{F(s+v_{0})-F(v_{0})\}=f(s+v_{0})-f(v_{0})\geq 0, 
\end{equation}
implies that $F(s+v_{0})-F(v_{0})$ is increasing with respect to $v_{0}$ for a fixed $s$. Since $B$ is increasing, $B^{-1}\{F(s+v_{0})-F(v_{0})\}$ is increasing in $v_{0}$ and hence $\ell(v_{0})$ is decreasing in $v_{0}$.
 For $s>0$
 \begin{align*}
     F(s+v_{0})-F(v_{0})=\int_{v_{0}}^{s+v_{0}}f(t)\ dt\geq \int_{v_{0}}^{s+v_{0}}f(v_{0})\ dt =sf(v_{0}),
 \end{align*}
thus $F(s+v_{0})-F(v_{0})\to +\infty$ as $v_{0}\to \infty$ because $f(r)\to\infty$ when $r\to\infty$.  We claim that $B(x)\to +\infty$ as $x\to+\infty$. With this we have,  $B^{-1}\{F(s+v_{0})-F(v_{0})\} \to \infty$ as $v_{0}\to \infty$ and so $\displaystyle \lim_{v_{0}\to\infty}\ell(v_{0})=0 $.\\
To prove the claim, we notice that $A'>\alpha$ for some $\alpha>0$ as it is continuous. Hence,
\begin{equation*}
\displaystyle\lim_{x\to+\infty}B(x)\geq \lim_{x\to+\infty}\int_{0}^{x}\alpha s\ ds= +\infty.
\end{equation*}


 On the other hand, as $v_{0}\to 0$, $F(s+v_{0})-F(v_{0})\to F(s)$, thus $B^{-1}\{F(s+v_{0})-F(v_{0})\}$ decreases to $ B^{-1}\{F(s)\}$ by \eqref{eqn_diff_wrt_phi0}. By Monotone convergence theorem and \ref{A2} we then conclude that $\displaystyle \lim_{v_{0}\to 0}\ell(v_{0})=\infty $. \\
 
\noindent  \textit{\underline{Step 2:}} To show that $\ell$ is strictly decreasing with respect to $v_{0}$.
 
 Since $f$ is monotonically increasing, $f(s+v_{0})-f(v_{0})>0$ for large $s$, by \eqref{eqn_diff_wrt_phi0} we get that $F(s+v_{0})-F(v_{0})$ is strictly increasing for large $s$ and so
 \begin{align*}
     \frac{1}{B^{-1}\{F(s+v_{0})-F(v_{0})\}}
 \end{align*} is strictly decreasing, as a result  $\ell$ is strictly decreasing.  That proves the claim.\\
 
 \noindent  \textit{\underline{Step 3:}} Uniqueness for solution of \eqref{eqn_1d},\eqref{eqn_1d_bdry}.\\
  Say $v_{1}$ and $v_{2}$ are two solutions. If $v^{1}_{0}=v^{2}_{0}$, uniqueness follows from the uniqueness of \eqref{eqn_1d_ivp} and its counterpart. If $v^{1}_{0}<v^{2}_{0}$, then, $\ell(v^{1}_{0})<\ell(v^{2}_{0})$. They can not simultaneously solve \eqref{eqn_1d_bdry}.\\
  \noindent  \textit{\underline{Step 4:}} Proof of \textit{(ii)}.\\
Integrating \eqref{eqn_one dimensional analysis} from $x>x_{m}$ to $\ell(v_{0})$ for $w=v_{\ell}$, we get,
\begin{equation*}
  \int_{v_{\ell}(x)}^{\infty}\frac{ ds}{B^{-1}\{F(s)-F(v_{0})\}}=\ell(v_{0})-x . 
\end{equation*}
 When $\ell=\ell(v_{0})\to \infty$, due to Keller-Osserman and Osgood conditions $v_{\ell}(x)\to 0$. For any compact set $K$ in $\re$ we can find $a>0$ in $\re$ such that $K\subset [-a,a]$ in $\mathbb{R}$, since $v_{\ell}(a)\to 0$, $v_{\ell}\to 0$ uniformly on $K$ as $\ell\to\infty$.
\end{proof}
 
 \begin{corollary}{(Non-existence)}
 There is no large solution solving $(A(v'))'= f(v)$ on $\re$.
 \end{corollary}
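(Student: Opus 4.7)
The plan is to argue by contradiction. Suppose $v$ is a large (nontrivial, nonnegative) solution of $(A(v'))'=f(v)$ on the whole line $\re$. The strategy is to bound $v$ above, on each finite interval $(-\ell,\ell)$, by the blow-up solution $v_\ell$ constructed in the preceding theorem, and then use part (ii) of that theorem---$v_\ell\to 0$ locally uniformly as $\ell\to\infty$---to squeeze $v$ to zero.

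The key step is the comparison $v\leq v_\ell$ on $(-\ell,\ell)$ for each $\ell>0$. Since $v$ is continuous on $[-\ell,\ell]$ and thus bounded there, while $v_\ell(x)\to\infty$ as $x\to\pm\ell$, the strict inequality $v<v_\ell$ holds on a one-sided neighborhood of each endpoint. I would then invoke a weak comparison principle for the quasilinear ODE $(A(u'))'=f(u)$ on a slightly shrunken subinterval (then let the shrinkage vanish); this is available because $A$ is strictly increasing (giving monotonicity in $u'$) and $f$ is nondecreasing (giving monotonicity in $u$). Granting this, for any fixed $x_0\in\re$ and all $\ell>|x_0|$ one has $v(x_0)\leq v_\ell(x_0)$; sending $\ell\to\infty$ and applying part (ii) of the theorem yields $v(x_0)\leq 0$. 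Combined with $v\geq 0$, this forces $v\equiv 0$, contradicting nontriviality.

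The main subtlety lies in the comparison principle, since the operator may degenerate where $A'(0)=0$ (e.g.\ $A(r)=|r|^{p-2}r$ with $p>2$). The direct $C^2$ maximum-point argument, namely that at a positive interior maximum $x^*$ of $w:=v-v_\ell$ one has $v'(x^*)=v_\ell'(x^*)$, so the equations force $v''(x^*)\geq v_\ell''(x^*)$ via monotonicity of $f$, contradicting $w''(x^*)\leq 0$, goes through when $v'(x^*)\neq 0$. If $v'(x^*)=0$ causes trouble, the fallback is to use the explicit first-integral identity of the theorem: since $(A(v'))'=f(v)\geq 0$ makes $A(v')$ and hence $v'$ nondecreasing, $v$ is convex, so $v$ either attains its minimum $v_0>0$ at some $x_m\in\re$ (in which case \eqref{eqn_for_KO} applied to $v$ forces blow-up at the finite point $x_m+\ell(v_0)$, contradicting entire existence) or is strictly monotonic, in which case an analogous energy identity together with \ref{A1} applied as $x\to +\infty$ again produces a contradiction. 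This two-pronged approach should cover both the regular and degenerate regimes of $A$.
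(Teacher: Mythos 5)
Your argument is correct, but your primary route differs from the paper's. The paper argues directly from the one-dimensional analysis: by \eqref{eqn_KO_ell} any solution whose minimum $v_{0}$ is positive blows up at the finite point $\ell(v_{0})$, so an entire solution must have $v_{0}=0$, which Lemma \ref{lemma phi_0 neq 0} (under \ref{A2}) rules out for a nontrivial solution. You instead compare $v$ with the blow-up solutions $v_{\ell}$ on $(-\ell,\ell)$ and squeeze $v$ to zero using the locally uniform convergence $v_{\ell}\to 0$; this is closer in spirit to the argument of \cite{bandle2013large} for the linear case. Your worry about the comparison step in the degenerate regime is not an obstruction: the relevant comparison principle here is the \emph{weak} one (Proposition \ref{prop_comparison} and its analogue in Section \ref{section 5}), proved by testing with a truncation of $u-v$ rather than by a pointwise second-derivative argument, and its hypothesis $\limsup u/v_{\ell}\leq 1$ is immediate since $v$ is bounded near $\pm\ell$ while $v_{\ell}$ blows up there; so your first prong already closes without the $C^{2}$ maximum-point detour. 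Your fallback prong is essentially the paper's own proof, and it has the merit of explicitly treating the case where the infimum of a convex entire solution is not attained (strict monotonicity), a case the paper's one-line proof passes over silently when it writes $v_{0}=\min_{\re}v$; your energy-identity argument at $x\to+\infty$ combined with \ref{A1} disposes of it correctly.
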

 \begin{proof}
 From \eqref{eqn_KO_ell}, there is a unique $\ell(v_{0})<+\infty$ whenever $v_{0}>0$ and from the last proof $\displaystyle\lim_{v_{0}\to 0}\ell(v_{0})=+\infty$. Given that $f\geq 0$, if $v$ is such a large solution on $\re$, then $v_{0}=\displaystyle\min_{x\in \re}v(x)=0$. A contradiction to Lemma \ref{lemma phi_0 neq 0}.
 \end{proof}
 
\begin{theorem}
Assume \ref{A1} and \ref{A3}. Fix
\begin{align*}
    L=\int_{0}^{\infty}\frac{ds}{B^{-1}\{F(s)\}}
\end{align*} then there is a unique solution $\Tilde{v}_{\ell}$ to \eqref{eqn_1d},\eqref{eqn_1d_bdry}. Also $\Tilde{v}_{\ell}\to 0$ uniformly on any compact set, moreover for $\ell>L$, $\Tilde{v}_{\ell}$ develops dead core.
\end{theorem}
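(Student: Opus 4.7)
The plan is to mirror the machinery of the previous theorem while tracking the changes caused by the failure of Osgood's condition. My first move is to rerun the analysis of $v_0 \mapsto \ell(v_0)$ from \eqref{eqn_KO_ell}: the arguments for continuity, strict monotonicity, and $\lim_{v_0 \to \infty} \ell(v_0) = 0$ transfer verbatim, but the monotone-convergence step that previously invoked \ref{A2} now invokes \ref{A3}, yielding $\lim_{v_0 \to 0^+} \ell(v_0) = L < \infty$. Consequently $\ell(\cdot) : (0, \infty) \to (0, L)$ is a continuous decreasing bijection, so for every $\ell < L$ the construction of the previous theorem (invert $\ell(\cdot)$ to pick $v_0 > 0$ and use \eqref{eqn_1d_sln_implicit}) produces a unique strictly positive $\tilde{v}_\ell$.

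For $\ell \geq L$ no positive $v_0$ is available, so I would build a dead-core solution by hand. Set $a := \ell - L \geq 0$, put $\tilde{v}_\ell \equiv 0$ on $[-a, a]$, and on $(a, \ell)$ define $\tilde{v}_\ell$ implicitly by $\int_0^{\tilde{v}_\ell(x)} ds / B^{-1}\{F(s)\} = x - a$; the integral is well defined near $0$ by \ref{A3} and blows up exactly at $x = \ell$ by \ref{A1}. Extending evenly to $(-\ell, -a)$ produces the candidate; retracing the steps between \eqref{eqn_1d_ivp} and \eqref{eqn_for_KO} yields $\tilde{v}_\ell'(\pm a) = 0$, so the pasting is $C^1$ and the ODE holds throughout, with both $A(v')$ and $f(v)$ vanishing on the dead core under the natural convention $f(0) = 0$ (which is implicit in all the IVP arguments of Section~\ref{Section 2} once $v_0 = 0$ is allowed).

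Uniqueness is the technical core. Let $v$ be any solution; the ODE together with $f \geq 0$ forces $A(v')$ to be non-decreasing, hence, since $A$ is strictly increasing, $v'$ is non-decreasing and $v$ is \emph{convex} on $(-\ell, \ell)$. Set $v_0 := \min v$, attained on a non-empty closed set $Z$. If $v_0 > 0$, convexity collapses $Z$ to a single point $x_m$ with $v'(x_m) = 0$; the IVP uniqueness of Section~\ref{Section 2} then forces $v$ to be the symmetric IVP solution, so $\ell = \ell(v_0) < L$ and $x_m = 0$. If $v_0 = 0$, convexity and $v \geq 0$ make $Z$ a closed interval $[\alpha, \beta]$. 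Because $A(v')$ is absolutely continuous and $A$ is a homeomorphism, $v'$ is continuous and $v'(\beta) = 0$; on $[\beta, \ell)$ the restriction of $v$ is then a non-trivial blow-up solution of the IVP with data $(w(\beta), w'(\beta)) = (0, 0)$, and the implicit formula $\int_0^{v(x)} ds / B^{-1}\{F(s)\} = x - \beta$ combined with blow-up at $\ell$ forces $\ell - \beta = L$, i.e. $\beta = \ell - L$. The analogous argument on $(-\ell, \alpha]$ gives $\alpha = -(\ell - L)$, matching the construction above; this delivers uniqueness for $\ell > L$ and, through the degenerate dead core $\{0\}$, existence and uniqueness for $\ell = L$ as well.

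The remaining assertions are then immediate: for a compact $K \subset \mathbb{R}$ with $a_0 := \sup_{K} |x|$, any $\ell > a_0 + L$ satisfies $K \subset [-(\ell - L), \ell - L]$, so $\tilde{v}_\ell \equiv 0$ on $K$, giving uniform convergence $\tilde{v}_\ell \to 0$; the dead-core statement for $\ell > L$ is the construction itself. I expect the main obstacle to be precisely the uniqueness argument in the dead-core regime, because under \ref{A3} the IVP at $v_0 = 0$ genuinely loses uniqueness (a one-parameter family of delayed-start solutions exists), and it is the interaction of convexity, continuity of $v'$, and the two-sided blow-up boundary data that pins down the interval structure of $Z$ and forces the dead core to be exactly $[-(\ell - L), \ell - L]$.
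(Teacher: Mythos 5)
Your proposal is correct and follows essentially the same route as the paper: identify $\lim_{v_0\to 0^+}\ell(v_0)=L<\infty$ under \ref{A3}, use the bijectivity of $\ell(\cdot)$ onto $(0,L)$ for $\ell<L$, and for $\ell\geq L$ build the delayed-start solution $\int_0^{\tilde v_\ell(x)}ds/B^{-1}\{F(s)\}=x-(\ell-L)$ with dead core $[-(\ell-L),\ell-L]$. Your convexity argument pinning the zero set to exactly that interval (and hence giving uniqueness in the dead-core regime, where the IVP with $v_0=0$ is genuinely non-unique) is in fact more complete than the paper's proof, which asserts uniqueness of the constructed solution without ruling out other placements of the dead core; the only nitpick is that collapsing the minimum set to a point when $v_0>0$ uses the equation ($f(v_0)>0$), not convexity alone.
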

\begin{proof}
    Arguments are similar for $\Tilde{v}_{0}>0$, except when $\Tilde{v}_{0}\to 0$, $F(\Tilde{v}_{0}+s)-F(\Tilde{v}_{0})\to F(s)$ and so by Monotone convergence theorem
    \begin{align*}
        \lim_{\Tilde{v}_{0}\to 0}\ell(\Tilde{v}_{0})=\int_{0}^{\infty}\frac{ds}{B^{-1}\{F(s)\}}=L<\infty.
    \end{align*}
  For the case when $\Tilde{v}_{0}=0$ and $\ell\geq L$, we construct the solution using the solution $v_{L}$ from the previous theorem, and before doing so, we see how it should fit. \\
  If an increasing $v$ should solve \eqref{eqn_1d_ivp}, let $x_{0}=min\{x>x_{m}~|~v(x)>0\}$, define $v$ for $x>x_{m}$ implicitly by
 \begin{align*}
     \int_{0}^{v(x)}\frac{ds}{B^{-1}\{F(s)\}}=x-x_{0} \quad \quad \forall ~ x>x_{0},
 \end{align*}
  when $x\to L+x_{0}$ we have $v(x)\to \infty$. By taking $x_{0}=\ell-L$ we have a solution for \eqref{eqn_1d_ivp} that blows up at $\ell$. i,e,. there is a unique solution $\Tilde{v}_{\ell}$ solving \eqref{eqn_1d_ivp} such that $\Tilde{v}_{\ell}(x)=0 ~\forall~ x\in (x_{m},\ell-L]$ and is given implicitly by 
 \begin{align*}
     \int_{0}^{\Tilde{v}_{\ell}(x)}\frac{ds}{B^{-1}\{F(s)\}}=x-\ell+L \quad \forall x\in [\ell-L,\ell).
 \end{align*}
  By the symmetric construction, we'll have a unique solution to \eqref{eqn_1d}, \eqref{eqn_1d_bdry} of the form
  \begin{align*}
      \Tilde{v}_{\ell}(x)=0 \quad \forall~x\in [L-\ell,\ell+L],
  \end{align*}
 And is implicitly defined by
 \begin{align*}
     \int_{0}^{\Tilde{v}_{\ell}(x)}\frac{ds}{B^{-1}\{F(s)\}}=x-\ell+L \quad \forall x\in (-\ell, L-\ell)\cup (\ell-L, \ell).
 \end{align*}
 Thus, $\Tilde{v}_{\ell}$ develops dead core in $ [L-\ell,\ell+L]$. Uniform convergence of $\Tilde{v}_{\ell}$ follows from the uniform convergence of $v_{L}$.
\end{proof}

\section{Preliminary Results for higher dimension }\label{section 3}

In the current section, we gather results for large solutions on bounded domains in general dimension $n$.  Similar results can be found in \cite{diaz1993}. 

Let $\Omega \in \rn$ be bounded  and for $p>1$ we consider the problem
\begin{equation}\label{eqn_noboundary condition} 
    \Delta_{p}u= f(u)\quad \text{in}\quad \Omega,
   \end{equation}
    along with the boundary condition
   \begin{equation*}
    u(x)\to \infty \quad \text{as} \quad x\to \partial \Omega.
\end{equation*}
The assumption on $f$ is as follows:
\begin{enumerate}
    \myitem{$\textbf{(A4)}$}\label{A4} $f:[0,\infty)\rightarrow [0,\infty)$ be a continuous, increasing function that is positive on $(0,\infty)$, $f(0)=0$ and satisfies Keller-Osserman condition \ref{A1}.
\end{enumerate}
Example of such an $f$ is $x^{q}$ for $q>p-1$. 
We consider a notion of a local weak solution, as follows:

\begin{definition}\label{dfn_solution}
Let $\Omega$ be a domain in $\rn$ and $u\in W_{loc}^{1,p}(\Omega)$ is a weak solution of $\Delta_{p}u= f(u)$ in $\Omega$ if
\begin{align*}
    -\int_{\Omega'}|\nabla u|^{p-2}\nabla u\cdot\nabla \phi \ dx=\int_{\Omega'}f(u)\phi \ dx \quad \forall~ \phi\in W_{0}^{1,p}(\Omega'),
\end{align*}
for every $\Omega'\subset\subset\Omega$.\\
    By a \textit{weak subsolution} (or, weak supersolution) to $\Delta_{p}u= f(u)$ in $\Omega$, we mean
    \begin{align*}
        -\int_{\Omega'}|\nabla u|^{p-2}\nabla u\cdot\nabla \phi \ dx\geq \int_{\Omega'}f(u)\phi \ dx  \ \  \text{\bigg(or, $\leq  \int_{\Omega'}f(u)\phi \ dx $\bigg)} \ \quad \forall~ \phi\in W_{0}^{1,p}(\Omega'),\phi\geq 0,
    \end{align*} 
    for every $\Omega'\subset \subset \Omega$.
\end{definition}
 One of the most important tools available for dealing with the wellposedness of general nonlinear operators is the comparison principle, we state the same for \eqref{eqn_noboundary condition}, proof can be found in \cite[Theorem 2.2]{diaz1993} and \cite[Theorem 2.15]{lindqvist}.


\begin{proposition}[\cite{diaz1993}]{(Comparison Principle)}\label{prop_comparison}
    Let $u,v\in W_{loc}^{1,p}(\Omega)\cap C(\Omega)$ be two function such that 
    \begin{align*}
        \Delta_{p}u- f(u)\geq \Delta_{p}v- f(v)\quad \text{in}~\Omega, \quad \text{weakly}
    \end{align*}
where $f$ is increasing with $f(0^{+})=0$ and that, 
\begin{align*}
    \limsup \frac{u(x)}{v(x)}\leq 1\quad \text{as}~dist(x,\partial\Omega)\to 0.
\end{align*} Then $u\leq v$ in $\Omega$. In particular, if $u$ is a weak subsolution and $v$ is a weak supersolution, the result holds.
\end{proposition}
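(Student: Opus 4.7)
The plan is to argue by contradiction using the classical perturbation $v\mapsto(1+\epsilon)v$, which accommodates the boundary hypothesis being phrased as a $\limsup$ of a ratio rather than a pointwise inequality. The task reduces to showing that for every $\epsilon>0$ the set
\begin{equation*}
\Omega_{\epsilon}:=\{x\in\Omega:\,u(x)>(1+\epsilon)v(x)\}
\end{equation*}
is empty; then letting $\epsilon\downarrow 0$ gives $u\le v$. Applying the boundary hypothesis with $\epsilon/2$ produces a neighbourhood of $\partial\Omega$ on which $u\le(1+\epsilon/2)v$, so $\overline{\Omega_{\epsilon}}\subset\subset\Omega$ and $\phi:=(u-(1+\epsilon)v)^{+}$ belongs to $W_{0}^{1,p}(\Omega')$ for any $\Omega'\subset\subset\Omega$ containing $\overline{\Omega_{\epsilon}}$.

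Inserting $\phi$ into the weak inequality $\Delta_{p}u-f(u)\ge\Delta_{p}v-f(v)$ and using $\nabla\phi=\nabla u-(1+\epsilon)\nabla v$ on $\Omega_{\epsilon}$ together with the fact that $f(u)\ge f(v)$ there (since $u>v\ge 0$ and $f$ is increasing), I obtain
\begin{equation*}
\int_{\Omega_{\epsilon}}\bigl(|\nabla v|^{p-2}\nabla v-|\nabla u|^{p-2}\nabla u\bigr)\cdot\bigl(\nabla u-(1+\epsilon)\nabla v\bigr)\,dx\;\ge\;\int_{\Omega_{\epsilon}}(f(u)-f(v))\phi\,dx\;\ge\;0.
\end{equation*}
Splitting $\nabla u-(1+\epsilon)\nabla v=(\nabla u-\nabla v)-\epsilon\nabla v$ and invoking the vector monotonicity $(|\xi|^{p-2}\xi-|\eta|^{p-2}\eta)\cdot(\xi-\eta)\ge 0$, the previous line rearranges into
\begin{equation*}
0\;\le\;\int_{\Omega_{\epsilon}}\bigl(|\nabla u|^{p-2}\nabla u-|\nabla v|^{p-2}\nabla v\bigr)\cdot(\nabla u-\nabla v)\,dx\;\le\;\epsilon\int_{\Omega_{\epsilon}}\bigl(|\nabla u|^{p-2}\nabla u-|\nabla v|^{p-2}\nabla v\bigr)\cdot\nabla v\,dx.
\end{equation*}

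H\"older's inequality bounds the right-hand side by $\epsilon$ times a quantity involving $\|\nabla u\|_{L^{p}(\Omega_{\epsilon})}$ and $\|\nabla v\|_{L^{p}(\Omega_{\epsilon})}$, both finite because $\Omega_{\epsilon}\subset\subset\Omega$ and $u,v\in W_{loc}^{1,p}(\Omega)$. A Young-type absorption moves part of this error onto the left and forces the monotonicity integrand to vanish; by the strict monotonicity of $\xi\mapsto|\xi|^{p-2}\xi$, this yields $\nabla u=\nabla v$ a.e.\ on $\Omega_{\epsilon}$, so $u-v$ is constant on each connected component of $\Omega_{\epsilon}$. Since $\overline{\Omega_{\epsilon}}\subset\Omega$, the relative boundary of each such component sits inside $\{u=(1+\epsilon)v\}$ by continuity, giving $u-v=\epsilon v>0$ there, which matched against $\nabla u=\nabla v$ and the strict inequality $u>(1+\epsilon)v$ in the interior — together with positivity of $f$ on $(0,\infty)$ — delivers the contradiction.

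I expect the main obstacle to be the uniform control of the $\epsilon$-error term in the second display: as $\epsilon\downarrow 0$ the set $\Omega_{\epsilon}$ can swell toward $\partial\Omega$, so the local $L^{p}$-norms of the gradients need not stay bounded. I plan to circumvent this by carrying the entire argument out at one fixed $\epsilon>0$ — extracting the contradiction from the component analysis of a single $\Omega_{\epsilon}$ rather than from a limit — following the scheme of \cite[Theorem 2.2]{diaz1993} and \cite[Theorem 2.15]{lindqvist}.
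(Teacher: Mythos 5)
The paper itself does not prove this proposition --- it is imported from \cite[Theorem 2.2]{diaz1993} and \cite[Theorem 2.15]{lindqvist} --- so your attempt has to stand on its own, and it does not: there is a genuine gap at the absorption step. Your setup is fine (the set $\Omega_{\epsilon}$, its compact containment via the $\limsup$ hypothesis, the admissibility of $\phi=(u-(1+\epsilon)v)^{+}$ as a nonnegative test function, and both displayed inequalities are correct). The failure is the sentence claiming that a Young-type absorption ``forces the monotonicity integrand to vanish.'' Writing $M_{\epsilon}$ for the monotonicity integral and $R_{\epsilon}$ for the integral on the right of your second display, H\"older plus Young gives at best $\epsilon R_{\epsilon}\le\tfrac12 M_{\epsilon}+C\epsilon^{2}\int_{\Omega_{\epsilon}}|\nabla v|^{p}$ (and for $1<p<2$ even this needs the degenerate form of the monotonicity inequality), hence $M_{\epsilon}\le C'\epsilon^{2}\int_{\Omega_{\epsilon}}|\nabla v|^{p}$. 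At fixed $\epsilon$ this is a positive bound, not zero. Indeed the inequality $0\le M_{\epsilon}\le\epsilon R_{\epsilon}$ cannot by itself force $M_{\epsilon}=0$: already pointwise with $p=2$, taking $\nabla u=(1+t)\nabla v$ with $0<t\le\epsilon$ gives integrand values $m=t^{2}|\nabla v|^{2}$ and $r=t|\nabla v|^{2}$, so $m\le\epsilon r$ holds with $m>0$. (This is no accident: $u=(1+t)v$ with small $t$ is exactly the kind of competitor the ratio hypothesis fails to exclude without further input.) So the conclusion $\nabla u=\nabla v$ a.e.\ on $\Omega_{\epsilon}$ does not follow, and the component analysis that was to deliver the contradiction never starts. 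You correctly identified the danger of letting $\epsilon\downarrow0$ (the sets swell and $\int_{\Omega_{\epsilon}}|\nabla v|^{p}$ is not uniformly controlled), but retreating to a single fixed $\epsilon$ does not repair this; it only relocates the failure.

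Even granting $\nabla u=\nabla v$ on a component $U$ of $\Omega_{\epsilon}$, the endgame as sketched needs more than the stated hypotheses: $u=v+c$ with $c>0$ on $U$ yields $f(v+c)\le f(v)$ there, which contradicts \emph{strict} monotonicity of $f$ but not the assumed ``increasing with $f(0^{+})=0$'' (and ``positivity of $f$ on $(0,\infty)$'' is not among the proposition's hypotheses). The standard ways around the cross term you are fighting are: (i) if only the difference condition $\limsup(u-v)\le0$ is needed, test with $(u-v-\delta)^{+}$, for which no $\epsilon\nabla v$ term appears and the Lindqvist argument closes immediately; (ii) under the ratio condition, verify that $(1+\epsilon)v$ is itself a supersolution --- which uses $\Delta_{p}((1+\epsilon)v)=(1+\epsilon)^{p-1}\Delta_{p}v$ together with a superhomogeneity assumption $f(\lambda t)\ge\lambda^{p-1}f(t)$ for $\lambda>1$, or the asymptotic information of condition \ref{A5} --- and then compare $u$ with $(1+\epsilon)v$ directly. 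Either way, an additional structural input on $f$ or on the boundary behaviour is unavoidable; your argument as written supplies none.
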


Next, we give a local uniform bound for the solution of \eqref{eqn_noboundary condition}. Similar results can be found in \cite{materopaper} and \cite{diaz1993}.

\begin{proposition}\label{prop_weak_local_bound}
    Let $\Omega$ be a bounded domain in $\rn$, $u\in W_{loc}^{1,p}(\Omega)\cap C(\Omega)$ be a weak sub solution of \eqref{eqn_noboundary condition}. For any $x_{0}\in \Omega$ and $R>0$ such that $B_{R}(x_{0})\subset \subset \Omega$, we have
    \begin{equation}\label{eqn_local_bound}
        u(x)\leq \omega\Big(\frac{R}{2}\Big) \quad \text{for all } x\in B_{\frac{R}{2}}(x_{0}),
    \end{equation}
    where $\omega$ solves
     \begin{equation}
        \begin{cases}
            (|\omega'(t)|^{p-2}\omega'(t))'=f(\omega(t)) \quad &\text{in } (-R,R)\\
            \omega(t)\to\infty \quad &\text{as } t\to\pm R.
        \end{cases}
    \end{equation}
\end{proposition}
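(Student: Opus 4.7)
The plan is to apply the comparison principle (Proposition \ref{prop_comparison}) with a radial blow-up barrier constructed on the ball $B_R(x_0)$ using the one-dimensional large solution $\omega$ from Section \ref{Section 2}. Because $u$ is bounded on the closure $\overline{B_R(x_0)} \subset\subset \Omega$ by continuity, while the barrier will blow up on $\partial B_R(x_0)$, the boundary condition $\limsup u/V \leq 1$ required by Proposition \ref{prop_comparison} is automatic.

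The key construction is a radial supersolution $V$ of $\Delta_p V = f(V)$ on $B_R(x_0)$ that blows up on $\partial B_R(x_0)$ and satisfies $V(x) \leq \omega(R/2)$ on $B_{R/2}(x_0)$. A tempting first attempt, $V(x) = \omega(|x-x_0|)$, fails because writing $\Delta_p$ in radial coordinates produces an extra nonnegative term $\frac{n-1}{r}|\omega'|^{p-2}\omega'$, making this $V$ a subsolution rather than a supersolution. Following \cite{materopaper}, I would instead take $V(x) = W(|x-x_0|)$, where $W$ is a solution of the full radial PDE
\begin{equation*}
\bigl(|W'|^{p-2}W'\bigr)' + \frac{n-1}{r}\,|W'|^{p-2}W' = f(W) \quad \text{on } (0,R),
\end{equation*}
with $W'(0)=0$ and $W(r)\to\infty$ as $r\to R$. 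Its existence can be obtained by a shooting/continuation argument analogous to the one-dimensional analysis of Section \ref{Section 2}, with the Keller--Osserman condition \ref{A1} ensuring blow-up in finite radius. By construction $V$ is a radial solution (hence supersolution) of $\Delta_p V = f(V)$ on $B_R(x_0)$, so Proposition \ref{prop_comparison} yields $u\leq V$ on $B_R(x_0)$.

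It remains to check $V(x)\leq \omega(R/2)$ for $|x-x_0|\leq R/2$. Since $W'\geq 0$, $V$ is radially nondecreasing, so it suffices to show $W(R/2)\leq \omega(R/2)$. Rewriting the radial equation as $(|W'|^{p-2}W')' = f(W) - \frac{n-1}{r}|W'|^{p-2}W' \leq f(W)$, we see that $W$ is a one-dimensional weak subsolution of the 1D equation $(|u'|^{p-2}u')' = f(u)$ on $(0,R)$, the same equation satisfied by $\omega$. Both $W$ and $\omega$ blow up at $r=R$, and a Keller--Osserman asymptotic analysis near the boundary (in the spirit of Section \ref{Section 2}) forces $\limsup_{r\to R^-} W(r)/\omega(r) \leq 1$. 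Applying the one-dimensional version of Proposition \ref{prop_comparison} then gives $W\leq \omega$ throughout $(0,R)$, and in particular $V(x)=W(|x-x_0|)\leq \omega(R/2)$ on $B_{R/2}(x_0)$.

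The main obstacle is the construction and control of the auxiliary radial function $W$: one must establish its existence via a shooting argument adapted to the radial $p$-Laplacian (which carries the singular $(n-1)/r$ term), and then justify the asymptotic matching $W/\omega \to 1$ near $r=R$ that enables the 1D comparison. Both steps rely crucially on the Keller--Osserman condition \ref{A1}; everything else is routine bookkeeping with the comparison principle and the radial formula for $\Delta_p$.
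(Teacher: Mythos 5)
Your diagnosis of the radial term is exactly right, and it cuts deeper than you may realize: the paper's own proof takes $v(x)=\omega(|x-x_{0}|)$ and ``verifies'' that it is a weak solution on $B_{R}(x_{0})$, but in the computation of $\eta'(t)$ the derivative of the factor $s^{n-1}$ is silently discarded, which is precisely the missing term $\frac{n-1}{r}|\omega'|^{p-2}\omega'$. Since $\omega'\geq 0$ on $(0,R)$, that term has a sign and makes $v$ a weak \emph{subsolution} of $\Delta_{p}v=f(v)$, so Proposition \ref{prop_comparison} cannot be invoked against it. Your replacement barrier, the genuine radial large solution $W$ of $(|W'|^{p-2}W')'+\frac{n-1}{r}|W'|^{p-2}W'=f(W)$ with $W'(0)=0$ and blow-up at $r=R$, is the correct object (it is what \cite{materopaper} and \cite{diaz1993} actually use), and it does yield $u\leq W(|x-x_{0}|)\leq W(R/2)$ on $B_{R/2}(x_{0})$.

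The gap is in your last step. From $(|W'|^{p-2}W')'=f(W)-\frac{n-1}{r}|W'|^{p-2}W'\leq f(W)$ you conclude that $W$ is a one-dimensional subsolution, but in the paper's convention (subsolutions satisfy $\Delta_{p}u-f(u)\geq 0$ weakly) this inequality says $W$ is a \emph{supersolution}. Proposition \ref{prop_comparison} then forces the inequality the other way: since $(|\omega'|^{p-2}\omega')'-f(\omega)=0\geq (|W'|^{p-2}W')'-f(W)$ and the Keller--Osserman blow-up rates at $r=R$ match, one obtains $\omega\leq W$ on $(0,R)$, not $W\leq \omega$. Equivalently, $\omega(|x-x_{0}|)$ is a subsolution on the ball lying \emph{below} the radial large solution $W(|x-x_{0}|)$, and generically strictly below in the interior, so $W(R/2)>\omega(R/2)$. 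Hence the bound $u\leq \omega(R/2)$ cannot be reached by this route; testing the statement with $u$ equal to the large solution on $B_{R+\varepsilon}(x_{0})$ and letting $\varepsilon\to 0$ indicates that the estimate with the one-dimensional $\omega$ is in fact too strong as stated for $n\geq 2$. What your argument does establish, namely $u\leq W(R/2)$ on $B_{R/2}(x_{0})$ with $W$ the radial large solution, is the correct local bound and is all that the subsequent results in the paper require.
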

 \begin{proof}
     We construct a radially symmetric weak boundary blow-up solution $v$, of \eqref{eqn_noboundary condition}, on $B_{R}(x_{0})$, and use the comparison principle for $u$ and $v$.

    Define $v(x)=\omega(|x-x_{0}|)\in C(B_{R}(x_{0}))$. For any $\phi\in C_{c}^{\infty}(B_{R}(x_{0}))$, by denoting $x=x_{0}+\frac{t}{R}\theta(x)$, where $t\in (0,R)$ and $\theta(x)\in \partial B_{R}(x_{0})$, we get
     \begin{equation}\label{eqn_eta'}
     \begin{split}
    &\int_{B_{R}(x_{0})}|\nabla v |^{p-2}\nabla v\cdot \nabla\phi \ dx\\
    &=\int_{B_{R}(x_{0})}|\omega'((|x-x_{0}|)|^{p-2}\omega'(|x-x_{0}|) \frac{x-x_{0}}{|x-x_{0}|}\cdot \nabla \phi \ dx\\
      &=\int_{0}^{R}\int_{\partial B_{t}(x_{0})}|\omega'(t)|^{p-2}\omega'(t) \theta(x) \cdot \nabla \phi(t,x)  \ dH(x) dt\\
        &=\int_{0}^{R}|\omega'(t)|^{p-2}\omega'(t)~ \int_{\partial B_{R}(x_{0})}\frac{\partial \phi}{\partial t}(t,x) \frac{t^{n-1}}{R^{n}} \ dH(x) dt.\\ 
         \end{split}
     \end{equation}
    as $\frac{\partial \phi}{\partial t}(t,x)=\theta(x) \cdot \nabla \phi(t,x)$.\\
    Define 
    \begin{equation*}
        \eta(t)=\begin{cases}
            \int_{0}^{t} \int_{\partial B_{R}(x_{0})}\frac{\partial}{\partial s}\Big(\phi(s,x) \frac{s^{n-1}}{R^{n}}\Big) \ dH(x) ds \quad &\text{for }t> 0\\
            0 \quad &\text{for } t\leq 0, 
        \end{cases}
    \end{equation*}
    which is a $C_{c}(\mathbb{R})$ function as $\phi\in C_{c}^{\infty}(B_{R}(x_{0}))$. $\phi(x)=0$ for $|x|=R$, implies
    \begin{equation*}
        \eta(t)=\int_{0}^{t} \int_{\partial B_{R}(x_{0})}\frac{\partial \phi}{\partial s}(s,x) \frac{s^{n-1}}{R^{n}} \ dH(x) ds \quad \text{for }t> 0.
    \end{equation*}
    Thus 
    \begin{equation*}
        \eta'(t)=\int_{\partial B_{R}(x_{0})}\frac{\partial \phi}{\partial t}(s,x) \frac{t^{n-1}}{R^{n}} \ dH(x) dt.
    \end{equation*}
$\omega$ is a solution, by \eqref{eqn_eta'} 
    \begin{align*}
        \int_{B_{R}(x_{0})}|\nabla v|^{p-2}\nabla v\cdot \nabla\phi \ dx&=\int_{0}^{R}f(\omega(t)) \eta(t) \ dt\\
        &=\int_{0}^{R}f(\omega(t))\int_{\partial B_{R}(x_{0})} \phi(t,x)\frac{t^{n-1}}{R^{n-1}}\ dH(x) dt\\
        &=\int_{B_{R}(x_{0})} f(v(x))\phi(x) \ dx.
    \end{align*}
    $v(x)\to \infty$ as $|x|\to R$, since $u<\infty$ in $B_{R}(x_{0})$, comparison principle implies 
    \begin{equation*}
        u(x)\leq v(x)\quad \text{for all }x\in B_{R}(x_{0}).
    \end{equation*}
    Because $\omega$ is increasing in $(0,R)$ we get \eqref{eqn_local_bound}. 
 \end{proof}
Next, we will see that the gradient of the sub-solution has a local integrability independent of the boundary conditions and its $L^{p}$ norm, by the usual cutoff technique.
\begin{proposition}\label{prop_grad_lp_bound}
     Let $\Omega$ be a bounded domain in $\rn$, $u\in W_{loc}^{1,p}(\Omega)\cap C(\Omega)$ be any weak sub solution of \eqref{eqn_noboundary condition}, then $\|\nabla u\|_{L^{p}(K)}$ is bounded independent of $u$.
\end{proposition}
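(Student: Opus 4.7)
The plan is a Caccioppoli-type estimate whose essential input is the local $L^{\infty}$ bound from \autoref{prop_weak_local_bound}: once we know $u$ is bounded on a neighborhood of $K$ by a constant depending only on $K,\Omega,f,p,n$ (and not on $u$), the rest is a standard cutoff argument.

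First I would thicken $K$: choose an open set $U$ with $K\subset\subset U\subset\subset \Omega$, cover $\overline{U}$ by finitely many balls $B_{R/2}(x_i)$ with $B_{R}(x_i)\subset\subset \Omega$, and apply \autoref{prop_weak_local_bound} on each. Because the radial majorant $\omega$ depends only on $R$ and $f$, this yields a constant $M=M(K,\Omega,f,p,n)$ with $\sup_{U}u\le M$, independent of the particular subsolution $u$. (I take $u\ge 0$ as is standard for the boundary blow-up setting, which also ensures $f(u)$ is defined.) Next, pick $\xi\in C_c^{\infty}(\Omega)$ with $0\le \xi\le 1$, $\xi\equiv 1$ on $K$, $\operatorname{supp}\xi\subset U$, and $|\nabla \xi|\le C_0$. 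Since $u\in W^{1,p}_{loc}(\Omega)\cap L^{\infty}(U)$, the function $\phi:=u\xi^{p}\ge 0$ lies in $W_0^{1,p}(U)$ and is an admissible test function in \autoref{dfn_solution}.

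Plugging $\phi=u\xi^{p}$ into the subsolution inequality, expanding $\nabla \phi=\xi^{p}\nabla u+p\,u\,\xi^{p-1}\nabla \xi$, and discarding the nonnegative integral $\int f(u)u\xi^{p}\ge 0$ yields
\[
\int_{U}\xi^{p}|\nabla u|^{p}\,dx \;\le\; p\int_{U} u\,\xi^{p-1}|\nabla u|^{p-1}|\nabla \xi|\,dx.
\]
A Young inequality $ab\le \varepsilon\, a^{p/(p-1)}+C_{\varepsilon}b^{p}$ applied with $a=\xi^{p-1}|\nabla u|^{p-1}$ and $b=u|\nabla \xi|$, with $\varepsilon$ small enough to absorb the resulting $\xi^{p}|\nabla u|^{p}$ term into the left-hand side, gives
\[
\int_{K}|\nabla u|^{p}\,dx \;\le\; \int_{U}\xi^{p}|\nabla u|^{p}\,dx \;\le\; C\int_{U} u^{p}|\nabla \xi|^{p}\,dx \;\le\; C\,C_0^{\,p}\,M^{p}\,|U|,
\]
a bound depending only on $K,\Omega,f,p,n$, not on $u$.

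There is no real obstacle here: the calculation is a routine Caccioppoli manipulation. The substantive step is the preceding proposition, which upgrades the local finiteness of $u$ to a \emph{$u$-independent} $L^{\infty}$ bound; without that, the right-hand side $\int u^{p}|\nabla \xi|^{p}$ could not be controlled uniformly in $u$, and the whole argument collapses. That is genuinely where this statement hinges.
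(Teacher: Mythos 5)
Your proof is correct and follows essentially the same route as the paper: test the subsolution inequality with $u$ times a cutoff, apply Young's inequality to the cross term, and invoke the $u$-independent local $L^{\infty}$ bound of Proposition \ref{prop_weak_local_bound} to control the right-hand side. The one refinement worth noting is your use of $\xi^{p}$ rather than a plain cutoff $\phi$: this lets the $|\nabla u|^{p}$ term produced by Young's inequality be absorbed exactly into the $\xi^{p}$-weighted left-hand side, whereas the paper's version absorbs an unweighted $\int_{K_{1}}|\nabla u|^{p}$ into $\int_{K_{1}}\phi\,|\nabla u|^{p}$ with $\phi\leq 1$, a step that strictly speaking needs your cutoff-power trick (or a covering argument) to be airtight.
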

\begin{proof}
    Let $K$ be a compact subset of $\Omega$, $K_{1}\subset \Omega$ be compact such that $K\subsetneq K_{1}$ and $\phi\in C_{c}^{\infty}(K_{1})$ such that $\phi\leq 1$, $\phi=1$ on $K$. Given the Proposition \ref{prop_weak_local_bound}, by taking $u\phi$ as a test function in the weak formulation, we get,
    \begin{equation*}
        \begin{split}
            \int_{K_{1}}|\nabla u|^{p}\phi \ dx &\leq \int_{K_{1}}f(u)u\phi \ dx - \int_{K_{1}}|\nabla u|^{p-2}u \nabla u\cdot \nabla \phi \ dx\\
            &\leq \int_{K_{1}}f(u)u\phi \ dx + \int_{K_{1}}|\nabla u|^{p-1}u  |\nabla \phi| \ dx.\\
        \end{split}
    \end{equation*}
    By the assumption that $f$ is increasing and Proposition \ref{prop_weak_local_bound}, also using young's inequality for $c>0$,
    \begin{equation*}
        \int_{K_{1}}|\nabla u|^{p}\phi \ dx \leq f(\|u\|_{L^{\infty}(K_{1})})\|u\|_{L^{\infty}(K_{1})}|K_{1}|+ \frac{c(p-1)}{p}\int_{K_{1}}|\nabla u|^{p} \ dx + \frac{1}{pc^{p-1}}\int_{K_{1}} u^{p}|\nabla \phi|^{p} \ dx
            \end{equation*}
            Choosing $c=\frac{p}{2(p-1)}$ we get 
            \begin{equation*}
                \int_{K}|\nabla u|^{p} \ dx\leq 2f(\|u\|_{L^{\infty}(K_{1})})\|u\|_{L^{\infty}(K_{1})}|K_{1}|+ \frac{2^{p}(p-1)^{p-1}}{p^{p}}\|\nabla \phi\|^{p}_{L^{p}(K_{1})}\|u\|^{p}_{L^{\infty}(K_{1})}
            \end{equation*}
            The result follows from the previous proposition and the compactness.
\end{proof}
The existence result for a large solution in a bounded domain is stated in the proceeding theorem in a compact form. Proof and related explanations can be found in  \cite[Section 6]{diaz1993} and \cite{materopaper}.
\begin{theorem}[\cite{materopaper},\cite{diaz1993}]
    Let $\Omega$ be a bounded domain in $\rn$, $n\geq 2$ whose boundary $C^{2}$, let $u_{m}$ solve 
    \begin{align}\label{finite_dirchlet}
        \begin{cases}
          \Delta_{p}u_{m}- f(u_{m})=0  \quad &\text{in} ~\Omega \\
         u_{m}=m \quad &\text{on}~\partial\Omega,
        \end{cases}
        \end{align}
    with $f$ satisfying \ref{A4}.  
    \begin{enumerate}
        \item Then $\{u_{m}\}\subset L_{loc}^{\infty}(\Omega)$ is bounded.
        \item Let $u(x)=sup~u_{m}(x)$, then $u\in W_{loc}^{1,\infty}(\Omega)$, and $u$ solves 
        \begin{equation}\label{singular_bdry}
          \begin{cases}
              \Delta_{p}u=f(u)\quad &\text{in } \Omega\\
              u(x)\to \infty \quad &\text{as } x\to\zeta\in\partial\Omega.
          \end{cases}  
        \end{equation}
    \end{enumerate} 
\end{theorem}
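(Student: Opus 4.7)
The plan is to realize the large solution as the monotone limit of the finite Dirichlet approximations $u_m$ and push the limit through the quasilinear operator using the tools of Section~\ref{section 3}.

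\textbf{Step 1: existence and monotonicity of $u_m$.} For fixed $m>0$, existence of $u_m\in W^{1,p}(\Omega)\cap C(\overline{\Omega})$ solving \eqref{finite_dirchlet} follows from the classical sub/supersolution method for quasilinear Dirichlet problems: the constant $0$ is a subsolution (since $f(0)=0$) and the constant $m$ is a supersolution (since $f\geq 0$), with boundary values ordered correctly. Comparison gives $0\leq u_m\leq m$. If $m_1<m_2$, then $u_{m_1}\leq u_{m_2}$ on $\partial\Omega$, and the ratio $u_{m_1}/u_{m_2}\to m_1/m_2<1$ as $x\to\partial\Omega$, so Proposition~\ref{prop_comparison} gives $u_{m_1}\leq u_{m_2}$ in $\Omega$; hence $\{u_m\}$ is pointwise nondecreasing.

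\textbf{Step 2: the local $L^\infty$ bound.} Fix $x_0\in\Omega$ and $R>0$ with $B_R(x_0)\subset\subset\Omega$. Each $u_m$ is a weak (sub)solution of \eqref{eqn_noboundary condition} on $B_R(x_0)$, so Proposition~\ref{prop_weak_local_bound} gives $u_m(x)\leq \omega(R/2)$ for every $x\in B_{R/2}(x_0)$, with $\omega$ independent of $m$. This proves (1). Setting $u(x):=\sup_m u_m(x)$, we obtain $u\in L^\infty_{loc}(\Omega)$, and Proposition~\ref{prop_grad_lp_bound} then gives a uniform bound for $\{u_m\}$ in $W^{1,p}_{loc}(\Omega)$.

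\textbf{Step 3: passage to the limit.} By monotonicity $u_m\nearrow u$ pointwise, and the local $L^\infty$ bound with dominated convergence yields $u_m\to u$ in $L^p_{loc}(\Omega)$. The uniform $W^{1,p}_{loc}$ bound provides a weak limit, which must equal $\nabla u$. To handle the nonlinearity $|\nabla u_m|^{p-2}\nabla u_m$ we need strong gradient convergence. For a compact $K\subset\Omega$ pick a cutoff $\psi\in C_c^\infty(\Omega)$ with $\psi\equiv 1$ on $K$, and test the weak formulations for $u_m$ and $u_k$ against $(u_m-u_k)\psi$ (admissible since $\psi$ has compact support). Subtracting, the algebraic monotonicity inequality for the $p$-Laplacian vector field, namely $(|a|^{p-2}a-|b|^{p-2}b)\cdot(a-b)\geq c_p|a-b|^p$ when $p\geq 2$ and the analogous Simon-type inequality involving $(|a|+|b|)^{2-p}$ when $1<p<2$, together with the uniform $L^\infty$ bound on $u_m$, H\"older's inequality, and monotonicity of $f$, controls $\|\nabla(u_m-u_k)\|_{L^p(K)}$ by a power of $\|u_m-u_k\|_{L^p(\text{supp}\,\psi)}\to 0$. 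Hence $\nabla u_m\to\nabla u$ strongly in $L^p_{loc}(\Omega)$, so $|\nabla u_m|^{p-2}\nabla u_m\to|\nabla u|^{p-2}\nabla u$ in $L^{p'}_{loc}(\Omega)$, and passing to the limit in the weak equations shows that $u$ solves the PDE in \eqref{singular_bdry} weakly.

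\textbf{Step 4: boundary blow-up, regularity, and main obstacle.} For any $M>0$, since $u\geq u_M$ and $u_M\in C(\overline{\Omega})$ with $u_M=M$ on $\partial\Omega$, there exists a neighborhood $V_M$ of $\partial\Omega$ in $\Omega$ with $u_M\geq M-1$, and thus $u\geq M-1$ on $V_M$; letting $M\to\infty$ proves $u(x)\to\infty$ as $x\to\partial\Omega$. Interior $C^{1,\alpha}$ regularity for the $p$-Laplacian (Tolksdorf--Lieberman) applied to the locally bounded weak solution $u$ with locally bounded right-hand side $f(u)$ then upgrades $u$ to $W^{1,\infty}_{loc}(\Omega)$. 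The principal obstacle is the strong gradient convergence in Step~3: the boundary trace $u_m\equiv m$ prevents using $u_m$ itself as a global test function, and the weak convergence $\nabla u_m\rightharpoonup\nabla u$ alone cannot be pushed through $|\cdot|^{p-2}(\cdot)$; only the interior cutoff argument combined with the algebraic monotonicity of the $p$-Laplacian rescues the passage to the limit.
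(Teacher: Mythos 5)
Your proposal is correct, and it is essentially the approach of the sources the paper cites for this theorem (the paper itself gives no proof here, deferring to \cite[Section 6]{diaz1993} and \cite{materopaper}) as well as of the paper's own argument in Section \ref{section 4}: monotone approximation by finite Dirichlet data, the local barrier bound of Proposition \ref{prop_weak_local_bound}, the local gradient bound of Proposition \ref{prop_grad_lp_bound}, and strong $W^{1,p}_{loc}$ convergence via the algebraic monotonicity of the $p$-Laplacian vector field with a cutoff. A minor point in your favour: testing the difference of the two weak formulations against $(u_m-u_k)\psi$ makes the term $\int (f(u_m)-f(u_k))(u_m-u_k)\psi\geq 0$ enter with a favourable sign, which streamlines the limit passage relative to the argument in Theorem \ref{thrm_large solution existence}, and you also correctly flag the Simon-type inequality needed when $1<p<2$, a case the paper's Section \ref{section 4} proof restricts away from.
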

Existence and $C^{1}$ regularity of solution to \eqref{finite_dirchlet} can be found in \cite{diaz1985}. 

The uniqueness of the boundary blow-up solutions can be formalized by looking at its asymptotic at the boundary, one needs to assume an additional condition on the relation between the operator and the force function. 
\begin{enumerate}
    \myitem{$\textbf{(A5)}$}\label{A5}  $f$ be such that 
    \begin{equation*}
       \liminf_{t\to\infty}\frac{\Psi_{p}(\beta t)}{\Psi_{p}(t)}>1\quad \text{for all } \beta\in (0,1) \quad (\Psi_{p} \text{ defined in \ref{A1}}).
    \end{equation*}
\end{enumerate}
 \begin{theorem}[\cite{materopaper}]\label{thrm_matero_asym}
    Let $\Omega$ be a bounded domain in $\rn$ with $C^{2}$ boundary and $u\in W_{loc}^{1,p}(\Omega)$ be a solution of \eqref{singular_bdry}, where $f$ satisfy \ref{A4} and \ref{A5}. Then 
    \begin{equation}\label{eqn_matero_asymptotic}
        \displaystyle \lim_{x\to \partial\Omega}\frac{u(x)}{\Phi_{p}(d(x))}=1,
    \end{equation}
    where $\Phi_{p}$ is the inverse of $\Psi_{p}$ and $d(x)=\text{dist}(x,\partial\Omega$).
 \end{theorem}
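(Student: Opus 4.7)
The plan is to establish the boundary asymptotic by squeezing $u$ between two-sided barriers built from the one-dimensional and radial blow-up profiles of Section \ref{Section 2} and Proposition \ref{prop_weak_local_bound}, applying the comparison principle of Proposition \ref{prop_comparison}, and invoking condition \ref{A5} to collapse the resulting ratio bounds to $1$. A key preliminary observation is that the radial profile $\omega_R$ on a ball of radius $R$ satisfies $\omega_R(t)/\Phi_p(R-t) \to 1$ as $t \to R^-$: multiplying the radial ODE $(|\omega_R'|^{p-2}\omega_R')' + \frac{n-1}{t}|\omega_R'|^{p-2}\omega_R' = f(\omega_R)$ by $\omega_R'$ and integrating near $R$, the curvature term is of lower order as $\omega_R, \omega_R' \to \infty$, so the leading behaviour matches the 1D implicit formula \eqref{eqn_1d_sln_implicit}, giving $\omega_R(t) \sim \Phi_p(R-t)$.

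\textbf{Upper bound.} By the $C^{2}$ regularity of $\partial\Omega$, there is a uniform $\rho>0$ such that every $x^* \in \partial\Omega$ admits an interior tangent ball $B_\rho(y^*) \subset \Omega$ with $x^* \in \partial B_\rho(y^*)$. For $x$ close to $\partial\Omega$ with nearest boundary point $x^*$, the segment $[y^*, x^*]$ contains $x$ and $d(x) = \rho - |x - y^*|$. The radial large solution $\overline{v}(z) := \omega_\rho(|z - y^*|)$ from Proposition \ref{prop_weak_local_bound} dominates $u$ in $B_\rho(y^*)$ by Proposition \ref{prop_comparison}, and the preliminary asymptotic then yields $\limsup_{x \to \partial\Omega} u(x)/\Phi_p(d(x)) \leq 1$, uniformly in $x^*$ by compactness.

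\textbf{Lower bound.} At each $x^*\in\partial\Omega$ there is an exterior tangent ball with $B_\rho(z^*) \cap \Omega = \emptyset$ and $x^*\in \partial B_\rho(z^*)$. I would construct a radial subsolution $\underline{v}(x) = \psi(|x - z^*| - \rho)$ on a thin shell $\{\rho < |x - z^*| < \rho + R\}\cap\Omega$, where $\psi$ is the 1D profile blowing up at $0$. For $R$ small, the curvature correction of the radial $p$-Laplacian (with the unfavourable sign in the exterior shell) can be absorbed into a rescaling $(1-\varepsilon)\underline{v}$, yielding a genuine weak subsolution in the sense of Definition \ref{dfn_solution}. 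Comparing with $u$ at the outer face of the shell (where $u$ is bounded below, e.g.\ by some $u_m$ from \eqref{finite_dirchlet}) and at $\partial B_\rho(z^*) \cap \partial\Omega$ (where both functions blow up) gives $u \geq (1-\varepsilon)\underline{v}$ locally. Together with the inequality $|x - z^*| - \rho \geq d(x)$ coming from $B_\rho(z^*)\subset\Omega^{c}$ and the radial asymptotic, this produces $\liminf u/\Phi_p(d) \geq 1-\varepsilon$, and then $\varepsilon\to 0$.

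The squeeze in both directions only yields $u(x)/\Phi_p(d(x)(1 + o(1))) \to 1$, because the perturbed distances $\rho - |x - y^*|$ and $|x - z^*| - \rho$ agree with $d(x)$ along the inward normal but off it only up to $O(d(x)^2)$ by $C^{2}$ regularity. Converting this into $u(x)/\Phi_p(d(x)) \to 1$ is exactly the function of condition \ref{A5}: after inversion it forces $\Phi_p(d(1 + o(1)))/\Phi_p(d) \to 1$ as $d \to 0^{+}$. The hardest step will be the construction of the lower radial subsolution on the exterior shell and the verification that it is a weak subsolution in the $W^{1,p}_{\mathrm{loc}}$ sense, since the sign of the radial curvature term must be controlled uniformly as $\psi$ blows up; the upper barrier, the comparison step, and the eventual application of \ref{A5} are by contrast routine given the structure in Sections \ref{Section 2} and \ref{section 3}.
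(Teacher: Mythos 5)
Your proposal follows essentially the same route as the paper's proof: uniform interior and exterior tangent balls from the $C^{2}$ boundary, a radial blow-up supersolution on the interior ball for the upper bound, a radial blow-up barrier on an exterior annulus for the lower bound, the comparison principle of Proposition \ref{prop_comparison}, and condition \ref{A5} to pass from the distance asymptotic to \eqref{eqn_matero_asymptotic} --- you have also correctly identified that \ref{A5} is exactly what converts $\Psi_{p}(u)/d\to 1$ into $u/\Phi_{p}(d)\to 1$. The one substantive point of divergence is the lower barrier. The paper does not build it by hand: it imports from \cite{materopaper} an exact radial solution $v_{r}$ on the annulus $\Tilde{B}^{z}_{2R}(y_{0})\setminus\Tilde{B}^{z}_{r}(y_{0})$ blowing up at the inner radius and vanishing at the outer one, together with the estimate $\lim_{t\to r}\Psi_{p}(v_{r}(t))/(t-r)\leq 1$, and then lets $r\to R$. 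Your alternative --- taking the one-dimensional profile $\psi$ and multiplying by $1-\varepsilon$ to absorb the negative curvature term $\frac{n-1}{s}|\psi'|^{p-2}\psi'$ --- is not justified as stated: since $\Delta_{p}((1-\varepsilon)\psi)=(1-\varepsilon)^{p-1}\bigl(f(\psi)+\tfrac{n-1}{s}|\psi'|^{p-2}\psi'\bigr)$, the subsolution inequality requires $(1-\varepsilon)^{p-1}f(\psi)-f((1-\varepsilon)\psi)$ to dominate the curvature term, which holds for $f(t)=t^{q}$, $q>p-1$, but does not follow from \ref{A4} and \ref{A5} alone; one needs either an additional growth hypothesis on $f$ or the shrinking-inner-radius device ($r<R$, $r\to R$) that the paper uses, which avoids the rescaling entirely. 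With that step repaired (or replaced by the citation the paper uses), your argument matches the paper's.
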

\begin{proof}
    Since $\Omega$ is $C^{2}$, it has uniform interior and exterior ball conditions. For $z\in\partial\Omega$, let $R>0$ be such that $B^{z}_{R}(x_{0})$ for some $x_{0}\in \Omega$, is the interior ball and, $\Tilde{B}_{R}^{z}(y_{0})$ for some $y_{0}\in \Omega^{c}$, be the exterior ball associated with $z$. For $0<r<R$, we consider two barriers $w_{r}\in W_{loc}^{1,p}(B_{r}^{z}(x_{0}))$ which is radially symmetric, increasing function on $B_{r}^{z}(x_{0})$ and $v_{r}\in W_{loc}^{1,p}(\Tilde{B}^{z}_{r,2R}(y_{0}))$, which is radially symmetric, decreasing function on the annulus $\Tilde{B}^{z}_{r,2R}(y_{0}):=\Tilde{B}^{z}_{2R}(y_{0})\setminus \Tilde{B}_{r}^{z}(y_{0})$. They individually solve 
    \begin{equation*}
        \begin{cases}
            \Delta_{p}w_{r}(t)=f(w_{r}(t)) \quad &\text{in } [0,r)\\
            w_{r}(t)\to \infty \quad &\text{as } t\to r,
        \end{cases}
    \end{equation*}
    and
    \begin{equation*}
    \begin{cases}
      \Delta_{p}v_{r}(t)=f(v_{r}(t)),\quad &\text{for } t\in (r,2R)\\
      v_{r}(t)\to\infty, \quad &\text{as } t\to r\\
      v_{r}(t)\to 0, \quad &\text{as } t\to 2R,
      \end{cases}
    \end{equation*}
    where $f$ has \ref{A4}. $w_{r}$ and $v_{r}$ verifies
    \begin{equation*}
         \displaystyle \lim_{t\to r} \frac{\Psi_{p}(w_{r}(t))}{r-t}= 1.
    \end{equation*}
    and
    \begin{equation*}
        \displaystyle \lim_{t\to r} \frac{\Psi_{p}(v_{r}(t))}{t-r}\leq 1.
    \end{equation*}
    For the existence of such a function, we refer \cite[Corollary 3.5, Proposition 4.2 and Proposition 4.3]{materopaper}. Comparison principle implies  $u\leq w_{r}$ on $B_{r}^{z}(x_{0})$  and hence $\Psi_{p}(u)\geq \Psi_{p}(w_{r})$ on $B_{r}^{z}(x_{0})$. Therefore, 
    \begin{equation*}
        \frac{\Psi_{p}(u(x))}{d(x,z)}\frac{t-r}{\Psi_{p}(w_{r}(x))}\geq 1.
    \end{equation*}
    Letting $r\to R$, as $d(x)\leq d(x,z)$
    \begin{equation}\label{eqn_matero inequality}
        \displaystyle \lim_{x\to z}\frac{\Psi_{p}(u(x))}{d(x)}\geq 1.
    \end{equation}
    For the other way inequality, we compare $u$ and $v_{r}$ on $\Tilde{B}^{z}_{r,2R}(y_{0})\cap \Omega$ and letting $r\to R$ and \eqref{eqn_matero inequality},
    \begin{equation*}
        \displaystyle \lim_{x\to z}\frac{\Psi_{p}(u(x))}{d(x)}= 1.
    \end{equation*}
    By the condition \ref{A5} on $f$, one can derive \eqref{eqn_matero_asymptotic}.
\end{proof}
This leads to the uniqueness,
\begin{theorem}\label{thrm_uniqueness_bdd_domain}
    Let $\Omega$, $f$ be as in Theorem \ref{thrm_matero_asym}, then \eqref{singular_bdry} admits a unique solution.
\end{theorem}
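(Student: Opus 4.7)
The plan is to deduce uniqueness as an essentially immediate consequence of the boundary asymptotic established in Theorem \ref{thrm_matero_asym}, combined with the comparison principle from Proposition \ref{prop_comparison}. The heavy lifting has already been done in proving that every solution behaves like $\Phi_p(d(x))$ at the boundary; once this normalization is available, distinct solutions cannot coexist because the comparison principle requires only $\limsup u/v \le 1$ at $\partial\Omega$ rather than actual ordering of boundary data.

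Concretely, suppose $u_1, u_2 \in W^{1,p}_{loc}(\Omega) \cap C(\Omega)$ both solve \eqref{singular_bdry}. First I would invoke Theorem \ref{thrm_matero_asym} to write
\begin{equation*}
\lim_{x\to\partial\Omega}\frac{u_i(x)}{\Phi_p(d(x))}=1,\qquad i=1,2.
\end{equation*}
Since $u_2(x)\to\infty$ as $x\to\partial\Omega$, the ratio $u_1/u_2$ is well defined in a neighborhood of $\partial\Omega$, and dividing the two asymptotics yields
\begin{equation*}
\lim_{x\to\partial\Omega}\frac{u_1(x)}{u_2(x)}=1.
\end{equation*}
In particular $\limsup_{x\to\partial\Omega} u_1(x)/u_2(x)\le 1$, and symmetrically for $u_2/u_1$.

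Next I would apply the comparison principle (Proposition \ref{prop_comparison}) to the pair $(u_1,u_2)$. Both functions satisfy $\Delta_p u_i - f(u_i)=0$ weakly in $\Omega$, so in particular $\Delta_p u_1 - f(u_1)\ge \Delta_p u_2 - f(u_2)$ weakly; the hypothesis $f$ increasing with $f(0^+)=0$ is guaranteed by \ref{A4}. The boundary control $\limsup u_1/u_2\le 1$ then gives $u_1\le u_2$ throughout $\Omega$. Reversing the roles of $u_1$ and $u_2$ gives the opposite inequality, and hence $u_1\equiv u_2$.

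There is no genuine obstacle here beyond verifying the hypotheses of Proposition \ref{prop_comparison}: the only mildly delicate point is that the ratio $u_1/u_2$ must be formed in a region where $u_2>0$, which is immediate since $u_2\to\infty$ at $\partial\Omega$ forces $u_2$ to be strictly positive in a one-sided tubular neighborhood of $\partial\Omega$. All the serious analytic work — constructing the radial barriers in the interior and exterior balls, and promoting $\Psi_p(u)/d \to 1$ to the sharper $u/\Phi_p(d)\to 1$ via the regularity assumption \ref{A5} on $f$ — is already absorbed into Theorem \ref{thrm_matero_asym}. Thus the proof is essentially a two-line deduction.
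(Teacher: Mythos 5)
Your proposal is correct and follows exactly the paper's own argument: both deduce $\lim_{x\to\partial\Omega} u_1(x)/u_2(x)=1$ from the asymptotic \eqref{eqn_matero_asymptotic} of Theorem \ref{thrm_matero_asym} and then apply the comparison principle (Proposition \ref{prop_comparison}) in both directions to conclude $u_1\equiv u_2$. Your version merely spells out the hypothesis-checking that the paper leaves implicit.
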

\begin{proof}
    If $u$ and $v$ are two solutions of \eqref{singular_bdry}, then by \eqref{eqn_matero_asymptotic}
    \begin{equation*}
        \displaystyle \lim_{x\to\partial\Omega} \frac{u(x)}{v(x)}=1,
    \end{equation*}
    using the comparison principle we can derive that $u=v$.
\end{proof}
In \cite{materopaper}, the solutions are not necessarily $C(\Omega)$, and the regularity of the solution considered is $C^{1,\alpha}(K)$ for some compact subset $K$ of $\Omega$ and some $\alpha\in(0,1)$. The composition principle used is local in nature, as given in \cite{diaz1985}; thus, uniqueness needs more assumption of $f$.

 \section{Infinite Cylinder}\label{section 4}
 
In this section, we address the existence and uniqueness of large solutions on infinite cylinders. Like in \cite{bandle2013large}, we would like to get the existence by the approximation of solutions on the finite cylinders $B^{n-1}_{1}(0)\times (-\ell,\ell)$. However, to work with a $C^{2}$ bounded domain, we modify $B^{n-1}_{1}(0)\times (-\ell,\ell)$  by attaching suitable domains on either side. Let
 \begin{align*}
     S_{\ell}=\{x\in\rn:\sum_{1}^{n-1}x_{i}^{2}<1, -\ell<x_{n}<\ell\}\cup\{x\in\rn:\sum_{1}^{n-1}x_{i}^{4}+(x_{n}\pm \ell)^{4}<1\}.
 \end{align*}
whose boundary is at least $C^{2}$.
We note $S_{\ell}\to S$ when $\ell\to\infty$, where $S=(B^{n-1}_{1}(0)\times \mathbb{R}) \subset \rn$.\\
By the comparison principle, we have the following result
\begin{proposition} Let $f$ satisfy \ref{A4} and $u_{\ell}$ solve \eqref{singular_bdry} on $S_{\ell}$ in the weak sense. Then,
    \begin{itemize}
    \item[(\textit{i})] $u_{\ell}\geq 0~\forall~\ell>0$ \quad (as $f(0)=0$)
    \item[(\textit{ii})] $\{u_{\ell}(x)\}_{\ell}$ is a decreasing sequence in the following sense:\\
    For $\ell(x)>0$ with $x\in S_{\ell(x)}$, 
    $\{u_{\ell}(x)\}_{\ell\geq\ell(x)}$ is decreasing in $\ell$.
\end{itemize}
\end{proposition}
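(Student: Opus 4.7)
The plan is to deduce both statements directly from the comparison principle of Proposition~\ref{prop_comparison}, with carefully chosen comparison functions. For (i) I would compare $u_{\ell}$ with the trivial solution $v\equiv 0$; for (ii) I would restrict the solution on the larger cylinder to the smaller one and exploit the local sup-bound from Proposition~\ref{prop_weak_local_bound} to control it near $\partial S_{\ell_{1}}$.

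For (i), note that $v\equiv 0$ is a weak solution of $\Delta_{p}v-f(v)=0$ in $S_{\ell}$ because $f(0)=0$ by \ref{A4}. Since $u_{\ell}\to\infty$ on $\partial S_{\ell}$, we have
\begin{equation*}
\limsup_{x\to\partial S_{\ell}}\frac{0}{u_{\ell}(x)}=0\leq 1,
\end{equation*}
so Proposition~\ref{prop_comparison}, applied with the ordered pair $(0,u_{\ell})$, yields $0\leq u_{\ell}$ on $S_{\ell}$.

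For (ii), I would fix $x\in S_{\ell(x)}$ and $\ell(x)\leq \ell_{1}\leq \ell_{2}$. By the construction of $S_{\ell}$ (the cylindrical body enlarges with $\ell$ while the end caps translate outward), one has the nesting $\overline{S_{\ell_{1}}}\subset S_{\ell_{2}}$. Consequently $u_{\ell_{2}}$ is a weak solution of $\Delta_{p}w=f(w)$ on $S_{\ell_{1}}$, and Proposition~\ref{prop_weak_local_bound} applied to finitely many balls compactly contained in $S_{\ell_{2}}$ that cover $\overline{S_{\ell_{1}}}$ shows that $u_{\ell_{2}}$ remains bounded up to $\partial S_{\ell_{1}}$. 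Since $u_{\ell_{1}}\to\infty$ on $\partial S_{\ell_{1}}$,
\begin{equation*}
\limsup_{y\to\partial S_{\ell_{1}}}\frac{u_{\ell_{2}}(y)}{u_{\ell_{1}}(y)}=0\leq 1,
\end{equation*}
and Proposition~\ref{prop_comparison} delivers $u_{\ell_{2}}\leq u_{\ell_{1}}$ throughout $S_{\ell_{1}}$; evaluating at $x$ gives the claim.

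The only mildly delicate point is confirming the geometric nesting $\overline{S_{\ell_{1}}}\subset S_{\ell_{2}}$ from the explicit description of the attached end caps; once that is extracted, boundedness of $u_{\ell_{2}}$ on $\overline{S_{\ell_{1}}}$ follows at once from Proposition~\ref{prop_weak_local_bound} and a compactness argument, and the rest reduces to a direct invocation of the comparison principle.
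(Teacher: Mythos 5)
Part (i) of your argument is correct and is exactly the intended one\-/line use of Proposition~\ref{prop_comparison}: $v\equiv 0$ is a weak solution because $f(0)=0$ under \ref{A4}, and the ratio $0/u_{\ell}$ vanishes as $x\to\partial S_{\ell}$ since $u_{\ell}$ blows up there.

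Part (ii) has a genuine gap. The nesting $\overline{S_{\ell_{1}}}\subset S_{\ell_{2}}$, which you flag as the ``only mildly delicate point'' and then assert, is false: the lateral surface $\{\sum_{1}^{n-1}x_{i}^{2}=1\}$, away from the end caps, belongs to $\partial S_{\ell_{1}}$ and to $\partial S_{\ell_{2}}$ simultaneously, so $\partial S_{\ell_{1}}\cap\partial S_{\ell_{2}}\neq\emptyset$. (With the paper's explicit quartic caps even the inclusion $S_{\ell_{1}}\subset S_{\ell_{2}}$ needs a small repair, since the caps bulge slightly outside the unit cylinder; but the shared lateral boundary is the essential obstruction.) Consequently $u_{\ell_{2}}$ is \emph{not} bounded up to all of $\partial S_{\ell_{1}}$: it blows up on the common lateral part, no ball centred there is compactly contained in $S_{\ell_{2}}$, so Proposition~\ref{prop_weak_local_bound} gives no control, and your claimed $\limsup u_{\ell_{2}}/u_{\ell_{1}}=0$ degenerates into an indeterminate $\infty/\infty$ on that portion of the boundary. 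Under \ref{A4} alone you cannot resolve this ratio by boundary asymptotics either, since Theorem~\ref{thrm_matero_asym} requires the extra hypothesis \ref{A5}. The standard repair is to compare at the level of the approximating problems \eqref{finite_dirchlet}: if $u_{\ell_{2},m}$ solves \eqref{finite_dirchlet} on $S_{\ell_{2}}$ with boundary data $m$, then $u_{\ell_{2},m}\leq m$ on all of $S_{\ell_{2}}$ by the maximum principle (it is a subsolution of the $p$-Laplace equation since $f\geq 0$), hence $\limsup_{x\to\partial S_{\ell_{1}}}u_{\ell_{2},m}(x)/u_{\ell_{1}}(x)=0$ and Proposition~\ref{prop_comparison} yields $u_{\ell_{2},m}\leq u_{\ell_{1}}$ on $S_{\ell_{1}}$; letting $m\to\infty$ gives $u_{\ell_{2}}\leq u_{\ell_{1}}$ there. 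Note this identifies $u_{\ell_{2}}$ with the solution constructed as $\sup_{m}u_{\ell_{2},m}$ in the quoted existence theorem; a direct comparison of two arbitrary large solutions, as you attempt, cannot be carried out under \ref{A4} alone.
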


Define \begin{equation}\label{eqn_dfn_of_u}
    u(x)=\inf_{\ell\geq\ell(x)}u_{\ell}(x) \quad \forall x\in S.
\end{equation}
 Then by Proposition \ref{prop_weak_local_bound}, $u\in L^{\infty}(\Tilde{S})$ for every $\Tilde{S}$ compactly contained in $S$ ($\Tilde{S}\subset\subset S$). Thus,  \begin{equation}\label{eqn_lp convergence}
    u_{\ell}\to u \quad \text{in}~ L^{q}(\Tilde{S})\quad \text{for } 1\leq q\leq \infty  \quad \text{and for any} \quad \Tilde{S}\subset\subset S.
\end{equation}

Our aim is to find a large solution to the following problem 
\begin{equation}\label{eqn_large solution for plaplacianon}
    \begin{cases}
        \Delta_{p}v= f(v)\quad &\text{in}\quad S\\
        v(x)\to \infty \quad &\text{as}~x\to \zeta\in\partial S.
    \end{cases}
\end{equation}
The function $u$ defined in (4.1) is a candidate, but one needs to pass the limit through the weak formulation. 
\begin{theorem}
    For $u_{\ell}$,$u$ and $\Tilde{S}$ as above and $f$ satisfying \ref{A4}, we have
    \begin{align*}
        u_{\ell}\rightharpoonup u \quad in~W^{1,p}(\Tilde{S})
    \end{align*}
   Thus $u\in W_{loc}^{1,p}(S)$.
\end{theorem}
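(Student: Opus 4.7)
The plan is to invoke the uniform estimates already established in Propositions \ref{prop_weak_local_bound} and \ref{prop_grad_lp_bound} to show that $\{u_\ell\}$ is bounded in $W^{1,p}(\tilde{S})$, extract a weakly convergent subsequence by reflexivity, and then identify its limit with $u$ using the $L^p$ convergence recorded in \eqref{eqn_lp convergence}.

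Concretely, fix $\tilde{S}\subset\subset S$ and choose an intermediate open set $K_1$ with $\tilde{S}\subset\subset K_1\subset\subset S$. For all $\ell$ large enough we have $K_1\subset\subset S_\ell$, so Proposition \ref{prop_weak_local_bound} (applied on balls covering $\overline{K_1}$ whose radii are controlled by $\operatorname{dist}(K_1,\partial S)>0$) yields
\begin{equation*}
\|u_\ell\|_{L^\infty(K_1)}\leq C_1,
\end{equation*}
where $C_1$ depends only on the radial $1$-dimensional barrier $\omega$ from Section \ref{Section 2} and is manifestly independent of $\ell$. Feeding this bound into the Caccioppoli-type argument of Proposition \ref{prop_grad_lp_bound} (i.e.\ testing the weak formulation against $u_\ell\phi$ for a cutoff $\phi\in C_c^\infty(K_1)$ with $\phi\equiv 1$ on $\tilde{S}$) produces
\begin{equation*}
\|\nabla u_\ell\|_{L^p(\tilde{S})}\leq C_2,
\end{equation*}
with $C_2$ depending only on $C_1$, $f(C_1)$, $|K_1|$ and $\|\nabla\phi\|_{L^p(K_1)}$, hence again independent of $\ell$.

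Thus $\{u_\ell\}$ is bounded in the reflexive space $W^{1,p}(\tilde{S})$, so some subsequence $u_{\ell_k}\rightharpoonup v$ weakly in $W^{1,p}(\tilde{S})$. The pointwise monotone decrease of $u_\ell$ to $u$ together with the uniform $L^\infty$ bound gives $u_\ell\to u$ in $L^p(\tilde{S})$ by dominated convergence (which is already \eqref{eqn_lp convergence}), so the weak $W^{1,p}$ limit $v$ must coincide with $u$ almost everywhere. Since every weakly convergent subsequence of $\{u_\ell\}$ has the same limit $u$, the full sequence satisfies $u_\ell\rightharpoonup u$ in $W^{1,p}(\tilde{S})$. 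Because $\tilde{S}$ was arbitrary, $u\in W^{1,p}_{\mathrm{loc}}(S)$.

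The only delicate point is checking that the constants $C_1,C_2$ really do not depend on $\ell$; this is transparent for $C_1$ because the radial supersolution used in Proposition \ref{prop_weak_local_bound} lives on a ball $B_R(x_0)$ that does not feel the end caps of $S_\ell$, and the estimate in Proposition \ref{prop_grad_lp_bound} depends on $u_\ell$ solely through its $L^\infty$-norm on $K_1$. No strong $W^{1,p}$ convergence is required, so the nonlinear nature of the $p$-Laplacian poses no obstacle at this stage; that issue will only arise later when one wants to pass to the limit inside the weak formulation.
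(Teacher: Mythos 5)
Your proposal is correct and follows essentially the same route as the paper, which simply states that the theorem ``follows directly from Propositions \ref{prop_weak_local_bound} and \ref{prop_grad_lp_bound}''; you have merely filled in the standard details (uniform $W^{1,p}$ bounds, reflexivity, identification of the weak limit with $u$ via the $L^{p}$ convergence, and the subsequence principle) that the paper leaves implicit.
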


\begin{proof}
  The result follows directly follows from Proposition \ref{prop_weak_local_bound} and  \ref{prop_grad_lp_bound}.
\end{proof} 

 This brings us to the crucial part:
\begin{theorem}\label{thrm_large solution existence}
   For $f$ satisfying \ref{A4}, $p\geq 2$, $u$ defined in \eqref{eqn_dfn_of_u} solves   \eqref{eqn_large solution for plaplacianon} in the weak sense.
\end{theorem}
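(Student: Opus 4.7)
\emph{Plan of proof.} The weak convergence $u_\ell\rightharpoonup u$ in $W^{1,p}_{loc}(S)$ is not enough to pass to the limit in the nonlinear term $|\nabla u_\ell|^{p-2}\nabla u_\ell$, so the first goal is to upgrade it to strong convergence of gradients in $L^p_{loc}(S)$. Once this is done, extracting an a.e.\ convergent subsequence together with the uniform $L^p$ gradient bound of Proposition \ref{prop_grad_lp_bound} gives $|\nabla u_\ell|^{p-2}\nabla u_\ell\to|\nabla u|^{p-2}\nabla u$ in $L^{p/(p-1)}_{loc}(S)$, and the weak formulation passes directly: the right-hand side $\int f(u_\ell)\phi\,dx$ converges to $\int f(u)\phi\,dx$ by continuity of $f$ together with the local $L^\infty$ bound of Proposition \ref{prop_weak_local_bound} and \eqref{eqn_lp convergence}.

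To upgrade convergence, I would use the monotonicity inequality valid for $p\ge 2$:
\[ (|\xi|^{p-2}\xi-|\eta|^{p-2}\eta)\cdot(\xi-\eta)\ \ge\ c_p\,|\xi-\eta|^{p},\qquad \xi,\eta\in\mathbb{R}^{n}.\]
Fix $\phi\in C_c^\infty(S)$ with $0\le\phi\le 1$, and let $\tilde S\subset\subset S$ contain $\operatorname{supp}\phi$. For $\ell,m$ large enough that $\tilde S\subset\subset S_\ell\cap S_m$, the product $(u_\ell-u_m)\phi$ is compactly supported in $S_\ell\cap S_m$ and is admissible in the weak equations satisfied by $u_\ell$ and $u_m$. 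Subtracting the two formulations and expanding $\nabla\bigl((u_\ell-u_m)\phi\bigr)$ gives
\begin{align*}
\int_{\tilde S}\bigl(|\nabla u_\ell|^{p-2}\nabla u_\ell-|\nabla u_m|^{p-2}\nabla u_m\bigr)\cdot\nabla(u_\ell-u_m)\,\phi\,dx\ &=\ -\int_{\tilde S}\bigl(f(u_\ell)-f(u_m)\bigr)(u_\ell-u_m)\,\phi\,dx\\
&\quad -\int_{\tilde S}\bigl(|\nabla u_\ell|^{p-2}\nabla u_\ell-|\nabla u_m|^{p-2}\nabla u_m\bigr)\cdot\nabla\phi\,(u_\ell-u_m)\,dx.
\end{align*}
The left side is bounded below by $c_p\int_{\tilde S}|\nabla(u_\ell-u_m)|^{p}\phi\,dx$. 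Both terms on the right are estimated by H\"older's inequality using the uniform bounds $\|\nabla u_\ell\|_{L^p(\tilde S)}\le C$ (Proposition \ref{prop_grad_lp_bound}) and $\|u_\ell\|_{L^\infty(\tilde S)}\le C$ (Proposition \ref{prop_weak_local_bound}), and both vanish as $\ell,m\to\infty$ because $u_\ell-u_m\to 0$ in $L^q(\tilde S)$ for every $q\in[1,\infty]$ by \eqref{eqn_lp convergence}. Hence $\{u_\ell\}$ is Cauchy in $W^{1,p}(\{\phi=1\})$, and since $\phi$ was arbitrary, $u_\ell\to u$ strongly in $W^{1,p}_{loc}(S)$, as required.

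The main obstacle, and the reason one cannot simply ``pass to the limit under weak convergence,'' is that the limit $u$ is not yet known to satisfy any equation, so $u$ itself (or a difference $u_\ell-u$) is not available as a test function: every estimate must be intrinsic to the sequence $\{u_\ell\}$. The boundary blow-up of each $u_\ell$ on $\partial S_\ell$ further forces the use of the cutoff $\phi$, and it is the interaction of $\nabla\phi$ with the nonlinear gradient term that produces the cross term handled by the uniform bound of Proposition \ref{prop_grad_lp_bound}. The hypothesis $p\ge 2$ enters only to obtain the clean lower bound $c_p|\xi-\eta|^{p}$ in the monotonicity inequality; for $1<p<2$ one would have to use the weighted Simon inequality and close the Cauchy estimate by a H\"older interpolation.
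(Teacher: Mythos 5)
Your argument is correct and lands on the same key point as the paper --- strong $W^{1,p}_{loc}$ convergence of $u_\ell$ via the $p\ge 2$ monotonicity inequality \eqref{vector inequlity}, after which the limit passage in both the flux term and the $f$-term is routine --- but it gets there by a genuinely different route. The paper tests the equation for $u_\ell$ against $(u_\ell-u)\psi$, which forces a split of the monotonicity integral into a piece handled by weak convergence (the $|\nabla u|^{p-2}\nabla u$ part) and a piece $I_1-I_2$, where $I_1$ is controlled by asserting $\bigl|\int|\nabla u_\ell|^{p-2}\nabla u_\ell\cdot\nabla\phi-\int f(u)\phi\bigr|\le \mathcal{O}_{\ell}(1)\,\|\phi\|_{W^{1,p}_0}$ \emph{uniformly} in $\phi$, i.e.\ convergence of the fluxes in the dual norm; that is the delicate step, precisely because the limit $u$ is not yet known to solve anything. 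Your Cauchy argument in $\ell,m$ sidesteps this entirely: $(u_\ell-u_m)\phi$ is a legitimate test function for both equations, the monotonicity inequality gives the lower bound $c_p\int|\nabla(u_\ell-u_m)|^p\phi$, and the two right-hand terms die by the uniform bounds of Propositions \ref{prop_weak_local_bound} and \ref{prop_grad_lp_bound} together with \eqref{eqn_lp convergence} (indeed the $f$-term is nonpositive since $f$ is increasing, so you could discard it outright). The only point you leave implicit is the boundary condition $u(x)\to\infty$ as $x\to\partial S$, which the paper likewise dispatches in one sentence from the definition \eqref{eqn_dfn_of_u}; it is worth a line (e.g.\ a lower barrier independent of $\ell$ near the lateral boundary), but it is not part of the limit-passage difficulty and does not affect the validity of your approach.
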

\begin{proof}
Note that $u(x) \to \infty$ as $x\to \partial S$ by the definition of $u$. 

    We have that $ \{u_{\ell}\}$, for large $\ell$ is uniformly bounded on $\Tilde{S}$ by the Proposition \ref{prop_weak_local_bound} and hence $\{f(u_{\ell})\}$ is bounded uniformly on $\Tilde{S}$. For any $\phi\in C_{c}^{\infty}(S)$, by taking $\Tilde{S}$ such that $supp(\phi)\subset\Tilde{S}$, dominated convergence theorem implies that 
   \begin{align*}
       \int_{\Tilde{S}}f(u_{\ell})\phi\to \int_{\Tilde{S}}f(u)\phi.
   \end{align*}
    That is \begin{equation}\label{eqn_int convergence}
        \int_{\Tilde{S}}|\nabla u_{\ell}|^{p-2}\nabla u_{\ell}\cdot \nabla\phi\to  \int_{\Tilde{S}}f(u)\phi.
    \end{equation}
    We aim to achieve 
    \begin{equation}\label{eqn_weak convergence of operator}
         \int_{\Tilde{S}}|\nabla u_{\ell}|^{p-2}\nabla u_{\ell}\cdot \nabla\phi\to  \int_{\Tilde{S}}|\nabla u|^{p-2}\nabla u\cdot \nabla\phi,
    \end{equation}
    by showing that $u_{\ell}\to u$ in $W^{1,p}(\Tilde{S})$.
    
    We use a vector-valued inequality 
    (see \cite{lindqvist}): For $x,y\in \rn$ there is a positive constant $c$ such that
     \begin{equation}\label{vector inequlity}
          |x-y|^{p}\leq c ~ (|x|^{p-2}x-|y|^{p-2}y) \cdot (x-y).
     \end{equation}
      Choose a cutoff function $\psi\in C_{c}^{\infty}(\Tilde{S}_{1})$ such that $\psi=1$ on $\Tilde{S}$ and $0\leq \psi \leq 1$, then 
    \begin{equation}\label{eqn_strong convergence}
    \begin{split}
    \int_{\Tilde{S}}|\nabla u_{\ell}-\nabla u|^{p} &\leq \int_{\Tilde{S}_{1}}\psi|\nabla u_{\ell}-\nabla u|^{p}\\
        &\leq c \int_{\Tilde{S}_{1}}\psi \{(|\nabla u_{\ell}|^{p-2}\nabla u_{\ell}-|\nabla u|^{p-2}\nabla u) \cdot (\nabla u_{\ell}-\nabla u)\}.
         \end{split}
    \end{equation}
     Since $\phi\mapsto \int_{\Tilde{S}_{1}}\psi\{|\nabla u|^{p-2}\nabla u \cdot \nabla \phi\}$ is a continuous linear functional on $W^{1,p}(\Tilde{S_{1}})$, by weak convergence
     \begin{align*}
         \int_{\Tilde{S}_{1}}\psi |\nabla u|^{p-2}\nabla u \cdot \nabla (u_{\ell}-u)\to 0\quad \text{as} \quad \ell\to\infty.
     \end{align*}
       Consider,
    \begin{align*}
        \int_{\Tilde{S}_{1}}|\nabla u_{\ell}|^{p-2}\nabla u_{\ell} \cdot \psi \nabla (u_{\ell}-u) &=\int_{\Tilde{S}_{1}}|\nabla u_{\ell}|^{p-2}\nabla u_{\ell} \cdot \{\nabla(\psi(u_\ell-u))-(u_{\ell}-u)\nabla \psi\}\\
        &=I_{1}-I_{2}.
    \end{align*}
    As $\|\nabla u_{\ell}\|_{L^{p}}$ is uniformly bounded with respect to $\ell$ by the Proposition \ref{prop_grad_lp_bound},  there is a $K>0$ such that
    \begin{align*}
        |I_{2}|&\leq \int_{\Tilde{S}_{1}}|\nabla u_{\ell}|^{p-1}|\nabla \psi\|u_{\ell}-u|\leq  \sup_{\Tilde{S}_{1}}|\nabla\psi| \|\nabla u_{\ell}\|_{L^{p}}^{p-1} \|u_{\ell}-u\|_{L^{p}} \leq K \|u_{\ell}-u\|_{L^{p}} \xrightarrow{\ell \to 0}  0.
    \end{align*}
    Taking care of $I_{1}$ is more delicate, we rewrite it as,
     \begin{align*}
        I_{1}
        =\int_{\Tilde{S}_{1}} \big(|\nabla u_{\ell}|^{p-2}\nabla u_{\ell} \cdot \nabla (\psi(u_{\ell}-u))- f(u)(u_{\ell}-u)\psi \big)  +  \int_{\Tilde{S}_{1}} f(u)(u_{\ell}-u)\psi .
    \end{align*}
    The second integral on the right-hand side tends to $0$ when $\ell\to 0$ by \eqref{eqn_lp convergence}.
By \eqref{eqn_int convergence} and density
\begin{align*}
    \int_{\Tilde{S}_{1}}|\nabla u_{\ell}|^{p-2}\nabla u_{\ell}\cdot \nabla\phi\to \int_{\Tilde{S}_{1}}f(u)\phi, \quad \forall~\phi\in W_{0}^{1,p}(\Tilde{S}_{1}),
\end{align*}
    this implies 
    \begin{align*}
        \biggl|\int_{\Tilde{S}_{1}}|\nabla u_{\ell}|^{p-2}\nabla u_{\ell}\cdot \nabla\phi- f(u)\phi\biggl|\leq \mathcal{O}_{\ell}(1)\|\phi\|_{W_{0}^{1,p}}\quad\forall~\phi\in W_{0}^{1,p}(\Tilde{S}_{1}).
    \end{align*}
    Taking $\phi=(u_{\ell}-u)\psi$, we get,
    \begin{align*}
        \biggl|\int_{\Tilde{S}_{1}}|\nabla u_{\ell}|^{p-2}\nabla u_{\ell}\cdot \nabla\{(u_{\ell}-u)\psi\}- f(u)(u_{\ell}-u)\psi\biggl|&\leq \mathcal{O}_{\ell}(1)\|(u_{\ell}-u)\psi\|_{W_{0}^{1,p}(\Tilde{S_{1}})} \leq \mathcal{O}_{\ell}(1) M.
    \end{align*}
    for some $M>0$, as $u_{\ell},u$ are bounded in $W^{1,p}(\Tilde{S}_{1})$. Thus 
    \begin{equation*}
        \int_{\Tilde{S}_{1}}|\nabla u_{\ell}|^{p-2}\nabla u_{\ell}\cdot \nabla((u_{\ell}-u)\psi)- f(u)(u_{\ell}-u)\psi \to 0 \quad \text{as}~\ell\to \infty.
    \end{equation*}
    Thus $I_1 \to 0$ when $\ell \to 0$. Then $u_{\ell}\to u$ in $W^{1,p}(\Tilde{S})$ follows by \eqref{eqn_strong convergence}.
   
     Up to a subsequence, with the dominated convergence theorem \eqref{eqn_weak convergence of operator} holds.
    The proof is complete by \eqref{eqn_int convergence} and \eqref{eqn_weak convergence of operator}.
\end{proof}
Similar arguments can also be found in \cite[Theorem 6.4]{diaz1993}, \cite{materopaper} and Browder-Minty method from \cite{evans}.
\medskip 
 
 Finally, we show that the large solution of \eqref{eqn_large solution for plaplacianon} coincides with the large solution of the same problem on $B^{n-1}_{1}(0) \subset \mathbb{R}^{n-1}$, as stated in the next theorem.
\begin{theorem}\label{thrm_large solution on S}
    Assume \ref{A4} and $u$ is as in the Theorem \ref{thrm_large solution existence}, then $u$ is independent of the $n^{\text{th}}$ variable. i,e,. $u(x)=u(x')$ where $x=(x',x_{n})$. Moreover, if we denote $v(x')=u(x',x_{n})$ for a fixed $x_{n}$, $v$ solves 
    \begin{equation}\label{Eqn_crossection}
    \begin{cases}
      \Delta_{p}v- f(v)=0 \quad &\text{in}~B^{n-1}_{1}(0)\\
      v\to \infty \quad &\text{as}~dist(x,\partial B^{n-1}_{1}(0)).
    \end{cases}
    \end{equation}
 \begin{remark}
     The large solution on the infinite cylinder coincides with the large solution on the cross-section $B^{n-1}_{1}(0)$. In other words, solutions of \eqref{singular_bdry} on $S_{\ell}$ asymptomatically converge to the large solution of the cross-sectional problem. 
 \end{remark}
\end{theorem}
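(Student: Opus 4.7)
\medskip
\textbf{Proof plan.} The strategy is two-fold: first show that $u$ is independent of $x_n$ by exploiting translation invariance of both the PDE and the infinite cylinder $S$, and then reduce the equation on $S$ to the stated problem on the cross-section $B^{n-1}_1(0)$ via a tensor-product test function.

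\medskip
\emph{Translation invariance.} Fix $h\in\mathbb{R}$ and consider the shifted function $v^h_\ell(x',x_n):=u_\ell(x',x_n+h)$, which is a weak solution of $\Delta_p v^h_\ell=f(v^h_\ell)$ on the translated domain $S_\ell-he_n$. Direct inspection of the definition of $S_\ell$ shows that for every $\ell$ large enough (depending on $h$) one can take $\ell'=\ell-|h|-1$ so that $S_{\ell'}\subset\subset S_\ell-he_n$; in particular, $\partial S_{\ell'}$ lies in the interior of $S_\ell-he_n$, on which $v^h_\ell$ is continuous and finite. Since $u_{\ell'}\to\infty$ on $\partial S_{\ell'}$, we have $\limsup v^h_\ell/u_{\ell'}=0$ there, so the Comparison Principle (Proposition~\ref{prop_comparison}) gives $v^h_\ell\leq u_{\ell'}$ on $S_{\ell'}$, i.e.
\begin{equation*}
u_\ell(x',x_n+h)\leq u_{\ell'}(x',x_n)\quad\text{for all }(x',x_n)\in S_{\ell'}.
\end{equation*}
Letting $\ell\to\infty$ (whence $\ell'\to\infty$) and using \eqref{eqn_dfn_of_u}, one concludes $u(x',x_n+h)\leq u(x',x_n)$. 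Repeating the argument with $-h$ in place of $h$ and then substituting $x_n\mapsto x_n+h$ yields the reverse inequality, so $u(x',x_n+h)=u(x',x_n)$ for every $h\in\mathbb{R}$, and $u$ depends only on $x'$.

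\medskip
\emph{Cross-sectional equation.} Define $v(x'):=u(x',0)$. Since $u$ is $x_n$-independent, Fubini gives $v\in W^{1,p}_{loc}(B^{n-1}_1(0))$ and $\nabla u=(\nabla_{x'}v,0)$. To derive the weak form on $B^{n-1}_1(0)$, pick $\psi\in C_c^\infty(B^{n-1}_1(0))$ and any $\chi\in C_c^\infty(\mathbb{R})$ with $\int_{\mathbb{R}}\chi\,dx_n=1$, and insert $\phi(x',x_n)=\psi(x')\chi(x_n)\in C_c^\infty(S)$ into the weak formulation for $u$ established in Theorem~\ref{thrm_large solution existence}. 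Because $\partial_n u\equiv0$, the integrand factors, and integrating out $x_n$ yields
\begin{equation*}
-\int_{B^{n-1}_1(0)}|\nabla_{x'}v|^{p-2}\nabla_{x'}v\cdot\nabla_{x'}\psi\,dx'=\int_{B^{n-1}_1(0)}f(v)\psi\,dx',
\end{equation*}
which is the weak formulation of the first line of \eqref{Eqn_crossection}. The blow-up condition $v\to\infty$ as $\mathrm{dist}(x',\partial B^{n-1}_1(0))\to 0$ is immediate from the corresponding behaviour of $u$ on $\partial S$.

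\medskip
\emph{Main obstacle.} The heart of the matter is the comparison step. One must (i) verify the geometric containment $S_{\ell'}\subset\subset S_\ell-he_n$ by explicit inspection of the $C^2$-caps attached in the definition of $S_\ell$, in particular that $\ell'\to\infty$ as $\ell\to\infty$, and (ii) confirm that $v^h_\ell$ is continuous on a compact neighbourhood of $\partial S_{\ell'}$ so that Proposition~\ref{prop_comparison} applies; the latter follows from the local $W^{1,\infty}$ regularity of $u_\ell$ in the interior of $S_\ell$. Once these verifications are in place, everything else is a direct consequence of comparison together with the product structure of $S$.
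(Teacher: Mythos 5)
Your cross-section step (the tensor-product test function $\psi(x')\chi(x_n)$ with $\int_{\mathbb{R}}\chi=1$, followed by Fubini) is exactly the paper's argument and is fine, as is the reduction of the blow-up condition. The gap is in the translation-invariance step. You assert that $S_{\ell'}\subset\subset S_\ell-he_n$ for $\ell'=\ell-|h|-1$, that $\partial S_{\ell'}$ lies in the interior of $S_\ell-he_n$, and hence that $v^h_\ell$ is finite on $\partial S_{\ell'}$ so that $\limsup v^h_\ell/u_{\ell'}=0$ there. This fails on the lateral boundary: the lateral part of $\partial S_{\ell'}$, namely $\partial B^{n-1}_1(0)\times[-\ell',\ell']$, is contained in the lateral boundary of $S_\ell-he_n$, since the two cylinders have the same cross-section and translation in $e_n$ does not move them apart laterally. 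Only the end caps of $\partial S_{\ell'}$ sit in the interior of $S_\ell-he_n$. On the shared lateral portion $v^h_\ell=u_\ell(\cdot+he_n)$ blows up exactly as $u_{\ell'}$ does, so the ratio is an $\infty/\infty$ expression; without the boundary asymptotics of Theorem \ref{thrm_matero_asym}, which require \ref{A5} (not assumed in this theorem), you cannot conclude $\limsup v^h_\ell/u_{\ell'}\le 1$, and Proposition \ref{prop_comparison} does not apply as invoked.

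The standard repair keeps your overall strategy: insert the finite-boundary-data approximations $u_{\ell,m}$ solving \eqref{finite_dirchlet} on $S_\ell$. Since $f\ge 0$, each $u_{\ell,m}$ is a subsolution of the $p$-Laplace equation and so $u_{\ell,m}\le m$ throughout $S_\ell$; hence $u_{\ell,m}(\cdot+he_n)$ is globally bounded on $S_{\ell'}$, the ratio against the blow-up solution $u_{\ell'}$ vanishes on all of $\partial S_{\ell'}$, and comparison gives $u_{\ell,m}(\cdot+he_n)\le u_{\ell'}$. Letting $m\to\infty$ and then $\ell\to\infty$ recovers $u(x',x_n+h)\le u(x',x_n)$, and the rest of your argument goes through. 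For comparison, the paper proceeds differently: it translates the limit $u$ itself, notes that $u_\tau$ solves the same blow-up problem on the translation-invariant domain $S$, and compares $u_\tau$ with $u$ directly on the infinite cylinder. That route meets the same difficulty (the ratio of two boundary blow-up solutions at $\partial S$), so the finite-data approximation above is the honest way to justify either version; your finite-cylinder framework is actually well suited to it once the lateral boundary is handled correctly.
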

\begin{proof}[Proof of the Theorem \ref{thrm_large solution on S}]
    Let $\tau=(0,\cdots,0,\tau_{n})$ with $\tau_{n}\in \mathbb{R}$, define 
    \begin{equation*}
        u_{\tau}(x)=u(x+\tau).
    \end{equation*} 
     As $S= B_{1}^{n-1}(0) \times \mathbb{R}$ and $u$ is a weak solution of \eqref{eqn_large solution for plaplacianon}, for any $\psi \in C_c^{\infty}(S)$ we have
    \begin{align*}
      & \int_{S}|\nabla u_{\tau}(x)|^{p-2}\nabla u_{\tau}(x)\cdot \nabla\psi(x)\ dx
      =\int_{S}|\nabla u(x)|^{p-2}\nabla u(x)\cdot \nabla\psi(x)\ dx\\
      &=- \int_{S}f(u(x))\psi(x)\ dx=- \int_{S}f(u(x+\tau))\psi(x)\ dx=- \int_{S}f(u_{\tau}(x))\psi(x)\ dx.
    \end{align*}
    Thus $u_{\tau}$ also solves \eqref{eqn_large solution for plaplacianon}. Then by comparison principle (Proposition \ref{prop_comparison}) $u(x+\tau)\leq u(x)$. Replacing $\tau$ by $-\tau$ we get $u(x-\tau)\leq u(x)$.
    Then for $y=x+\tau$ we have, $u(x)=u(y-\tau)\leq u(y)=u(x+\tau)$. Thus $u(x)$ is independent of the last variable.
    
    \smallskip
    To see $v$ is the large solution on $B_{1}^{n-1}(0)$, consider $\phi\in C_{c}^{\infty}(B^{n-1}_{1}(0))$, take $\Tilde{\phi}\in C_{c}^{\infty} (\re)$ such that $\int_{\mathbb{R}} \Tilde{\phi} =1$ and define $\psi(x)=\phi(x')\Tilde{\phi}(x_{n})$, Clearly $\psi\in C_{c}^{\infty}(S)$. Then,
    \begin{equation}\label{thrm_test_fn_v}
        -\int_{S}|\nabla u(x)|^{p-2}\nabla u(x)\cdot \big(\Tilde{\phi}(x_n)\, \nabla_{x'}\phi (x'),\phi(x') \, \Tilde{\phi}'(x_n)\big)\ dx=\int_{S}f(u(x))\psi(x)\ dx,
    \end{equation}
    where $\nabla u=(\nabla_{x'}v,0)$. Now separating the domain of integration in $x'$ and $x_n$ variables, and using  $\int_{\mathbb{R}} \Tilde{\phi} =1$ we find 
        \begin{align*}
        -\int_{B^{n-1}_{1}(0)}|\nabla_{x'}v(x')|^{p-2}\nabla_{x'}v(x')\cdot \nabla_{x'}\phi(x')\ dx' = \int_{B^{n-1}_{1}(0)}f(v(x')\phi(x')\ dx'.
    \end{align*}
    This completes the proof.
\end{proof}

\begin{theorem}(\textbf{Uniqueness:})
Assume $f$ satisfies \ref{A5} and \ref{A4}, then the equation \eqref{eqn_large solution for plaplacianon} has a unique solution.  
\end{theorem}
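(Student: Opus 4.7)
The strategy is to reduce the uniqueness on the infinite cylinder $S$ to the known uniqueness result for large solutions on the cross-section $B_1^{n-1}(0)$ (Theorem \ref{thrm_uniqueness_bdd_domain}) by showing that every weak solution of \eqref{eqn_large solution for plaplacianon} must be independent of the $x_n$-variable. The key input is a sharp boundary asymptotic of the Matero type.

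First I would prove that any weak solution $v$ of \eqref{eqn_large solution for plaplacianon} satisfies
\begin{equation*}
\lim_{x\to\partial S}\frac{v(x)}{\Phi_{p}(d(x))}=1,
\end{equation*}
where $d(x)=1-|x'|$ is the distance from $x=(x',x_n)\in S$ to $\partial S=\partial B_{1}^{n-1}(0)\times\mathbb{R}$. The barrier construction from the proof of Theorem \ref{thrm_matero_asym} transfers because at every boundary point $z=(z',z_n)$ with $|z'|=1$ the open ball $B_{1}^{n}((0,z_n))$ is an interior ball of radius $1$ and $B_{1}^{n}((2z',z_n))$ is an exterior ball of radius $1$. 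The radial barriers $w_{r}$ and $v_{r}$ of Theorem \ref{thrm_matero_asym} then give matching upper and lower estimates yielding $\lim_{x\to\partial S}\Psi_{p}(v(x))/d(x)=1$, and \ref{A5} upgrades this to the ratio asymptotic displayed above.

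Next, I would exploit the translation invariance of the equation and of $\partial S$ in the $x_n$-direction to prove $x_n$-invariance of $v$. For $\tau\in\mathbb{R}$, set $v_{\tau}(x):=v(x+\tau e_{n})$; a change of variables in the weak formulation (as in the proof of Theorem \ref{thrm_large solution on S}) shows $v_{\tau}$ is again a weak solution of \eqref{eqn_large solution for plaplacianon}. Since $d(x+\tau e_{n})=d(x)$, both $v$ and $v_{\tau}$ share the same boundary asymptotic, so $v_{\tau}(x)/v(x)\to 1$ as $x\to\partial S$. Applying Proposition \ref{prop_comparison} in both directions yields $v=v_{\tau}$ on $S$. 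Writing $v(x)=V(x')$ and proceeding exactly as in the last part of the proof of Theorem \ref{thrm_large solution on S}, $V$ solves the cross-sectional problem \eqref{Eqn_crossection} on $B_{1}^{n-1}(0)$; Theorem \ref{thrm_uniqueness_bdd_domain} then gives uniqueness of $V$ and hence of $v$.

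The principal obstacle is the boundary asymptotic: although the barriers are constructed locally and $\partial S$ enjoys uniform interior and exterior ball conditions, one must verify that the Matero-type estimates hold uniformly in $z_{n}\in\mathbb{R}$ so that the limit $x\to\partial S$ is honestly taken across the unbounded boundary. A secondary concern is applying Proposition \ref{prop_comparison} on the unbounded domain $S$; this is legitimate here since $\partial S$ is approached precisely when $d(x)\to 0$ (translation in $x_{n}$ alone keeps $x$ away from $\partial S$), making the $\limsup$ hypothesis well-posed and the pointwise comparison argument valid.
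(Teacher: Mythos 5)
Your proposal is correct and, at bottom, follows the same strategy the paper intends: reduce uniqueness on $S$ to Theorem \ref{thrm_uniqueness_bdd_domain} on the cross-section via translation invariance and the comparison principle. The difference is one of completeness rather than of route. The paper's proof is a bare citation of Theorems \ref{thrm_uniqueness_bdd_domain} and \ref{thrm_large solution on S}; read literally, those two results only control the particular solution $u=\inf_{\ell}u_{\ell}$, and do not by themselves explain why an \emph{arbitrary} weak solution of \eqref{eqn_large solution for plaplacianon} must coincide with it. You supply exactly the missing ingredients: a Matero-type boundary asymptotic on the unbounded cylinder (obtained by translating the interior/exterior ball barriers of Theorem \ref{thrm_matero_asym}, which is legitimate since the balls and barriers at $(z',z_n)$ are $x_n$-translates of those at $(z',0)$, so the estimate is automatically uniform in $z_n$), and then the $x_n$-invariance of \emph{every} solution, which is what actually licenses the passage to \eqref{Eqn_crossection}. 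This is a genuine improvement in rigor. The one step you should treat more carefully is the final application of Proposition \ref{prop_comparison} on $S$: that proposition is stated (and proved in the cited references) for bounded domains, and on an unbounded domain the set where $v_{\tau}>(1+\epsilon)v$ need not be compactly contained in $S$ even when the ratio tends to $1$ as $d(x)\to 0$. A clean fix is to run the comparison on the bounded truncations $S\cap\{|x_{n}|<\ell\}$, using the uniform local bound of Proposition \ref{prop_weak_local_bound} to absorb the two caps (e.g.\ compare $v_{\tau}$ with $(1+\epsilon)v$ plus a barrier blowing up at the caps) and then let $\ell\to\infty$ and $\epsilon\to 0$. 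Note that the paper itself invokes Proposition \ref{prop_comparison} on $S$ in the proof of Theorem \ref{thrm_large solution on S} without comment, so your argument is no less justified than the original; you have merely made the implicit difficulty explicit.
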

\begin{proof}
    The result follows from the Theorem \ref{thrm_uniqueness_bdd_domain} and the Theorem \ref{thrm_large solution on S}. 
\end{proof}

\section{Extension}\label{section 5}
In this section, we generalize the results from the preceding section to 
\begin{itemize}
\item Operator of the form:
    \begin{equation}\label{eqn_general}
    div(Q(|\nabla u|)\nabla u)- f(u)=0 \quad \text{in} ~\Omega,
\end{equation}
where $Q:(0,\infty)\to (0,\infty)$ is a continuously differentiable function such that $Q(0^{+})=0$, denoting $a(r)=Q(r)r$, we assume that $a'(r)>0$ for $r>0$.

\item Domains that become unbounded in more than one direction but not all, and having $C^{2}$ cross-section.
\end{itemize}

In both cases, we discuss the dissimilarity in the proof of the results given in section \ref{section 4}. 

\medskip

While considering \eqref{eqn_general}, one needs to modify the notion of weak solution; first we notice that $|\nabla u|^{p-2}\nabla u$ gives a bounded linear functional on $W^{1,p}(\Omega)$ whenever $u\in W^{1,p}(\Omega)$, which, need not be the case with $Q(|\nabla u|)\nabla u$, 
thus we have the following definition. 
\begin{definition}
    Let $1<q<\infty$ and $q'$ be its conjugate. Let $\Omega$ be an open set in $\rn$, $u\in\{v\in W^{1,q}_{loc}(\Omega)~|~Q(|\nabla v|)\nabla v\in (L_{loc}^{q'}(\Omega))^{n}\}$ is called a \textit{Weak Solution} to \eqref{eqn_general} if for any $\Omega'\subset\subset\Omega$
    \begin{align*}
        \int_{\Omega'}Q(|\nabla u|)\nabla u\cdot \nabla v=\int_{\Omega'}f(u)v \quad \forall~v\in W_{0}^{1,q}(\Omega')
    \end{align*}
    whenever $f(u)\in L_{loc}^{q'}(\Omega)$.
\end{definition}

For $Q(r)=r^{p-2}$ we get p-Laplacian. Another example for $Q$ is $(1+r^{2})^{\frac{-1}{2}}$, see \cite[Remark 2.1]{diaz1993}. The definition of weak super solution and weak sub solution follows parallel to Definition \ref{dfn_solution} and the modification of the above definition.
 The existence result on the infinite cylinder for \eqref{eqn_general}, closely follows the results in Section \ref{section 4}.
\begin{theorem}\label{thrm_Q existence}
Let $S_{\ell}(\omega)$ and $S(\omega)$ be as above,  also $u_{\ell}$ be a large solution to \eqref{eqn_general} on $S_{\ell}(\omega)$ with $f$ satisfying \ref{A4}, then there is a function $u\in W_{loc}^{1,\infty}(S(\omega))$ such that
\begin{align*}
    u_{\ell}\rightharpoonup u \quad in~W^{1,q}_{loc}(S(\omega))\quad \text{where} ~1\leq q<\infty,
\end{align*}
and
\begin{align*}
    u_{\ell}\overset{\ast}{\rightharpoonup} u \quad in ~W_{loc}^{1,\infty}(S(\omega)).
\end{align*}
    Moreover $u$ solves \eqref{eqn_mainQ} .
\end{theorem}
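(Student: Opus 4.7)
The plan is to follow the scheme of Section \ref{section 4}, with the strict monotonicity of the flux $\xi\mapsto Q(|\xi|)\xi$ (guaranteed by $a'(r)>0$) playing the role of the algebraic vector inequality \eqref{vector inequlity} used in Theorem \ref{thrm_large solution existence}. First, by the comparison principle for \eqref{eqn_general} from \cite{diaz1993}, $u_\ell\geq 0$ and $\{u_\ell(x)\}$ is non-increasing in $\ell$, so $u(x):=\inf_\ell u_\ell(x)$ is well-defined on $S(\omega)$. A local $L^\infty$ bound $u_\ell\leq C(K)$ on every compact $K\subset\subset S(\omega)$, uniform in $\ell$, follows as in Proposition \ref{prop_weak_local_bound} by constructing a radially symmetric blow-up supersolution on a ball containing $K$; this is permissible because the one-dimensional analysis of Section \ref{Section 2} was carried out for a general increasing $A$ with $A'>0$ and applies verbatim with $A=a$.

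Second, to obtain the announced $W^{1,\infty}_{loc}$ bound I would invoke a uniform interior Lipschitz estimate for bounded weak solutions of \eqref{eqn_general} of the type developed by Lieberman/DiBenedetto for quasilinear operators with the given structure. This delivers $\|\nabla u_\ell\|_{L^\infty(K)}\leq C(K)$ uniformly in $\ell$, and extracting subsequences gives $u_\ell\overset{*}{\rightharpoonup} u$ in $W^{1,\infty}_{loc}(S(\omega))$, $u_\ell\rightharpoonup u$ in $W^{1,q}_{loc}$ for every $1\leq q<\infty$, and $Q(|\nabla u_\ell|)\nabla u_\ell\overset{*}{\rightharpoonup}\chi$ in $L^\infty_{loc}$ for some $\chi$. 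Passing to the limit in the weak formulation for $u_\ell$ (exactly as in \eqref{eqn_int convergence}) then shows that $\chi$ satisfies $-\text{div}\,\chi=f(u)$ weakly on $S(\omega)$.

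Third, to identify $\chi=Q(|\nabla u|)\nabla u$ I would run the monotonicity/Minty-type passage from the proof of Theorem \ref{thrm_large solution existence}. Choosing a cutoff $\psi\in C_c^\infty(\tilde S_1)$ with $\psi\equiv 1$ on $\tilde S$ and testing the equation for $u_\ell$ against $\psi u_\ell$, the right-hand side is handled by dominated convergence on $f(u_\ell)\psi u_\ell$ and by weak-$*$ convergence of the flux against the strongly-converging $u_\ell\nabla\psi$; combined with the limit equation tested against $\psi u$, this forces
\begin{equation*}
\lim_{\ell\to\infty}\int\psi\,Q(|\nabla u_\ell|)\nabla u_\ell\cdot\nabla u_\ell = \int\psi\,\chi\cdot\nabla u.
\end{equation*}
Expanding $E_\ell:=\int\psi\,(Q(|\nabla u_\ell|)\nabla u_\ell-Q(|\nabla u|)\nabla u)\cdot(\nabla u_\ell-\nabla u)$ and using weak-$*$ convergence of the flux together with weak convergence of $\nabla u_\ell$, every cross-term collapses and $E_\ell\to 0$. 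Since the integrand of $E_\ell$ is pointwise nonnegative, a subsequence converges to $0$ a.e.\ on $\tilde S$, and strict monotonicity of $\xi\mapsto Q(|\xi|)\xi$ then forces $\nabla u_\ell\to\nabla u$ a.e.; the $L^\infty_{loc}$ bound together with dominated convergence promotes this to convergence of the flux, identifying $\chi=Q(|\nabla u|)\nabla u$.

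Finally, the boundary condition $u(x)\to\infty$ as $\mathrm{dist}(x,\partial S(\omega))\to 0$ follows by comparing $u_\ell$ with the large solution $v$ on the bounded $C^2$ cross-section $\omega$, extended as a constant along the unbounded directions: $v$ solves \eqref{eqn_general} on each $S_\ell(\omega)$, blows up at the same rate as $u_\ell$ on the lateral boundary, and remains finite on the endcaps where $u_\ell$ blows up, so $\limsup v/u_\ell\leq 1$ on $\partial S_\ell(\omega)$ and $v\leq u_\ell$ on $S_\ell(\omega)$ by Proposition \ref{prop_comparison}, whence $v\leq u$. The main obstacle in the plan is the uniform interior Lipschitz estimate in the second paragraph; the authors avoided it in Section \ref{section 4} by remaining in the $W^{1,p}_{loc}$ category, and upgrading to $W^{1,\infty}_{loc}$ for a general $Q$ requires a nontrivial a priori gradient bound that stays uniform as the boundary values of $u_\ell$ blow up.
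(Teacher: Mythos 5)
Your proposal follows essentially the same route as the paper's own (very brief) proof: a local $L^{\infty}$ bound via a radial blow-up supersolution built from the one-dimensional analysis, a uniform interior gradient estimate quoted from the literature (the paper cites \cite[Corollary 5.5]{diaz1993} where you invoke Lieberman/DiBenedetto), Banach--Alaoglu and separability for the weak and weak-$*$ convergences, and a monotonicity/Minty identification of the limiting flux, which is exactly the alternative the paper points to in \cite[Theorem 6.4]{diaz1993} when no analogue of the vector inequality is available. Your write-up is in fact more complete than the paper's on the two points it leaves implicit --- the Minty argument identifying $\chi=Q(|\nabla u|)\nabla u$ and the boundary blow-up of $u$; for the latter, comparing with the bounded solutions of the finite Dirichlet problems on the cross-section (extended constantly) rather than with the cross-sectional large solution would let you avoid matching blow-up rates on the lateral boundary, where your barrier and $u_{\ell}$ both diverge.
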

\begin{proof}
    An essential ingredient for the proof is a local uniform bound for the gradient of the solution. See \cite[Corollary 5.5]{diaz1993} for such an estimate. By replacing $|\nabla u|^{p-2}\nabla u$ with $Q(|\nabla u|)\nabla u$ in the proof of the Proposition \ref{prop_weak_local_bound}, we get a local bound for the solution $u$. The first part of the result is from the Banach-Alaoglu theorem and separability.  To show that $u$ solves \eqref{eqn_mainQ}, one needs to get similar inequality for non-linearity in the gradient as in \eqref{vector inequlity}, to have local strong convergence of $u_{\ell}$.   
   Otherwise, one has to work with weak convergence. The idea can be found in the proof of the existence of a large solution on a bounded domain in \cite[Theorem 6.4]{diaz1993}. 
\end{proof}
 This solution is translation invariant with respect to the unbounded variable.
\begin{theorem}
    Let $u$ be as in the Theorem \ref{thrm_Q existence} then $u$ is independent of $n^{\text{th}}$ variable. Moreover, if we denote $v(x')=u(x',x_{n})$ for a fixed $x_{n}$, $v$ solves 
    \begin{equation*}
      \begin{cases}
      div(Q(|\nabla v|)\nabla v)- f(v)=0 \quad &\text{in}~\omega\\
      v\to \infty \quad &\text{as}~dist(x,\partial \omega).
    \end{cases}  
    \end{equation*}
``Large solution on the infinite cylinder coincides with the large solution on the cross-section $\omega$"
\end{theorem}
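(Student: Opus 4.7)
The plan is to mirror the two-step proof of Theorem \ref{thrm_large solution on S} almost verbatim: first show that $u$ is independent of $x_n$ by a translation-comparison argument, then recover the cross-sectional equation by testing the weak formulation against a separable function.

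For the translation step, fix $\tau = (0, \ldots, 0, \tau_n)$ and set $u_\tau(x) := u(x + \tau)$. Because $S(\omega) = \omega \times \mathbb{R}$ is invariant under axial translations and the divergence-form operator $\mathrm{div}(Q(|\nabla \cdot|)\nabla \cdot)$ commutes with them, the change of variables $y = x + \tau$ in the weak formulation shows that $u_\tau$ is also a weak solution of \eqref{eqn_mainQ} on $S(\omega)$ with the same blow-up on $\partial S(\omega)$. The comparison principle for the quasilinear operator — valid under the monotonicity assumption $a'(r) > 0$, cf.\ the analogue of Proposition \ref{prop_comparison} given in \cite[Theorem 2.2]{diaz1993} — then yields $u_\tau \leq u$ on $S(\omega)$, and applying the same argument with $-\tau$ produces the reverse inequality. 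Hence $u(x+\tau) = u(x)$ for every axial $\tau$, so $u$ depends only on $x'$.

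For the second step, set $v(x') := u(x', x_n)$ (well-defined by the first step), and let $\phi \in C_c^\infty(\omega)$ be arbitrary. Pick $\widetilde{\phi} \in C_c^\infty(\mathbb{R})$ with $\int_{\mathbb{R}} \widetilde{\phi} = 1$ and set $\psi(x) := \phi(x')\widetilde{\phi}(x_n) \in C_c^\infty(S(\omega))$. Since $\nabla u(x) = (\nabla_{x'} v(x'), 0)$, applying the weak formulation of \eqref{eqn_mainQ} against $\psi$ and using Fubini gives
\begin{equation*}
- \int_\omega Q(|\nabla v(x')|)\, \nabla v(x') \cdot \nabla\phi(x')\, dx' = \int_\omega f(v(x'))\,\phi(x')\, dx',
\end{equation*}
so $v$ solves the corresponding equation on $\omega$ in the weak sense. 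The blow-up $v(x') \to \infty$ as $x' \to \partial\omega$ is immediate from the boundary condition for $u$ together with the identity $\mathrm{dist}(x, \partial S(\omega)) = \mathrm{dist}(x', \partial\omega)$.

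The main delicate point is the invocation of the comparison principle in the translation step, which formally requires $\limsup_{x \to \partial S(\omega)} u_\tau(x)/u(x) \leq 1$, and both functions are infinite on the lateral boundary. Two ways out are available: (i) exploit that $\mathrm{dist}(\cdot, \partial S(\omega))$ is axially translation-invariant, so any one-sided boundary asymptotic depending only on this distance — of the type produced by the radial barrier constructions of \cite{materopaper} transplanted to the cylindrical setting — automatically forces the ratio to tend to $1$; or (ii) compare $u$ and $u_\tau$ on an exhaustion by bounded subdomains where standard comparison applies with finite boundary data, and pass to the limit. The same issue appears in the p-Laplacian case (Theorem \ref{thrm_large solution on S}) and is handled the same way.
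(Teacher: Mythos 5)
Your proposal is correct and takes essentially the same route as the paper: for this theorem the paper simply declares the proof ``similar to that in the Theorem \ref{thrm_large solution on S}'', and that proof consists of exactly your two steps --- the translation--comparison argument giving independence of $x_n$, followed by testing the weak formulation against a separable $\psi(x)=\phi(x')\tilde{\phi}(x_n)$ with $\int_{\mathbb{R}}\tilde{\phi}=1$ and separating variables. Your closing remark about justifying the hypothesis $\limsup u_\tau/u\leq 1$ of the comparison principle when both functions blow up on the lateral boundary flags a point the paper leaves implicit, and either of your proposed remedies (a distance-only boundary asymptotic, or exhaustion by bounded subdomains) is a reasonable way to close it.
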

  The proof is similar to that in the Theorem \ref{thrm_large solution on S}.
  \begin{remark}
      The large solution on $S(\omega)$ is unique whenever the large solution on $\omega$ is unique.
  \end{remark}
 
For completeness, we state the comparison principle for \eqref{eqn_general}, see  \cite[Theorem 2.2]{diaz1993} for the proof.  
\begin{proposition}{(Comparison Principle)}
    Let $u,v\in W_{0}^{1,p}(\Omega)\cap C(\Omega)$ be two function such that 
   \begin{align*}
       div(Q(|\nabla u|)\nabla u- f(u)\geq div(Q(|\nabla v|)\nabla v- f(v)\quad \text{in}~\Omega,
   \end{align*} 
and that,
\begin{align*}
    \limsup \frac{u(x)}{v(x)}\leq 1\quad \text{as}~dist(x,\partial\Omega)\to 0.
\end{align*} 
Then $u\leq v$ in $\Omega$. In particular, if $u$ is a weak subsolution and $v$ is a weak supersolution, the result holds.
    \end{proposition}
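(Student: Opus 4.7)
The plan is to argue by contradiction using the rescaled cutoff test function $\phi_{\varepsilon}=(u-(1+\varepsilon)v)^{+}$, which is the standard device for comparison under an asymptotic ratio boundary condition. Suppose the open set $\{u>v\}\cap\Omega$ is non-empty. Since $f(0^{+})=0$ and $f\geq 0$, a Hopf/strong maximum principle argument gives $v>0$ in $\Omega$, so for every $\varepsilon>0$ the set $\Omega_{\varepsilon}:=\{x\in\Omega:u(x)>(1+\varepsilon)v(x)\}$ is open, and $\bigcup_{\varepsilon>0}\Omega_{\varepsilon}=\{u>v\}$, so $\Omega_{\varepsilon}$ is non-empty for all sufficiently small $\varepsilon$. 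The hypothesis $\limsup u/v\leq 1$ at $\partial\Omega$ ensures that for each such $\varepsilon$ there is a neighborhood $\mathcal{N}$ of $\partial\Omega$ on which $u\leq(1+\varepsilon)v$, hence $\Omega_{\varepsilon}\subset\subset\Omega$ and $\phi_{\varepsilon}\in W^{1,q}_{0}(\Omega')$ for some $\Omega_{\varepsilon}\subset\Omega'\subset\subset\Omega$, with $\phi_{\varepsilon}\geq 0$.

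Next I would insert $\phi_{\varepsilon}$ into the weak subsolution inequality for $u$ and the weak supersolution inequality for $v$, subtract, and observe that $\nabla\phi_{\varepsilon}=\nabla u-(1+\varepsilon)\nabla v$ on $\Omega_{\varepsilon}$. Splitting $\nabla u-(1+\varepsilon)\nabla v=(\nabla u-\nabla v)-\varepsilon\nabla v$ yields the identity
\begin{align*}
\int_{\Omega_{\varepsilon}}\bigl(Q(|\nabla u|)\nabla u-Q(|\nabla v|)\nabla v\bigr)\cdot(\nabla u-\nabla v)
+\int_{\Omega_{\varepsilon}}(f(u)-f(v))\phi_{\varepsilon}\\
\leq \varepsilon\int_{\Omega_{\varepsilon}}\bigl(Q(|\nabla u|)\nabla u-Q(|\nabla v|)\nabla v\bigr)\cdot\nabla v.
\end{align*}
Since $a(r)=Q(r)r$ is strictly increasing, the vector field $\xi\mapsto Q(|\xi|)\xi$ is strictly monotone, so $(Q(|\nabla u|)\nabla u-Q(|\nabla v|)\nabla v)\cdot(\nabla u-\nabla v)\geq 0$ pointwise, with equality iff $\nabla u=\nabla v$. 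On $\Omega_{\varepsilon}$ we have $u>(1+\varepsilon)v\geq v$, so monotonicity of $f$ gives $(f(u)-f(v))\phi_{\varepsilon}\geq 0$. Both terms on the left are therefore non-negative.

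Finally I would pass $\varepsilon\to 0^{+}$. The right-hand side is bounded by $\varepsilon\,C$, where $C$ is controlled by local $L^{q'}$ bounds for $Q(|\nabla u|)\nabla u$ and $Q(|\nabla v|)\nabla v$ (built into the definition of weak solution) and a local $L^{q}$ bound for $\nabla v$ (for the $p$-Laplace case this is Proposition \ref{prop_grad_lp_bound}; in the general quasilinear case one uses the local gradient estimate cited in the proof of Theorem \ref{thrm_Q existence}, see \cite[Corollary 5.5]{diaz1993}). Since $\Omega_{\varepsilon}\nearrow\{u>v\}$ as $\varepsilon\to 0$, monotone convergence yields
\begin{equation*}
\int_{\{u>v\}}\bigl(Q(|\nabla u|)\nabla u-Q(|\nabla v|)\nabla v\bigr)\cdot(\nabla u-\nabla v)+\int_{\{u>v\}}(f(u)-f(v))(u-v)=0,
\end{equation*}
so both integrands vanish a.e.\ on $\{u>v\}$. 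The strict monotonicity of $\xi\mapsto Q(|\xi|)\xi$ forces $\nabla u=\nabla v$ a.e.\ on $\{u>v\}$; since $\{u>v\}$ is open and $u-v$ is continuous with zero gradient, $u-v\equiv c>0$ on each connected component $W$. If $\overline{W}\cap\Omega\not\subset\partial\Omega$, continuity gives $u=v$ on $\partial W\cap\Omega$, so $c=0$, a contradiction; otherwise $W$ touches $\partial\Omega$ and the limsup hypothesis $u/v\to 1$ combined with $v\to\infty$ (if $v$ blows up) or $v$ bounded (if $c>0$ forces $u/v\to 1+c/v\not\to 1$) also yields a contradiction. The main obstacle is the $\varepsilon$-remainder on the right: it requires a genuine local bound on $\nabla v$ that is independent of the blow-up at the boundary, which is the reason the regularity inputs of \cite{diaz1985} or the gradient estimate of \cite[Corollary 5.5]{diaz1993} are crucial; the rest of the argument is essentially algebraic monotonicity.
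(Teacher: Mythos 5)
The paper does not actually prove this proposition: it is stated ``for completeness'' with a pointer to \cite[Theorem 2.2]{diaz1993}, so there is no internal argument to compare yours against, and your attempt has to stand on its own. It does not quite stand, and the failure is precisely at the point you yourself flag as ``the main obstacle'': the passage to the limit in the remainder $\varepsilon\int_{\Omega_{\varepsilon}}\bigl(Q(|\nabla u|)\nabla u-Q(|\nabla v|)\nabla v\bigr)\cdot\nabla v$. For each fixed $\varepsilon>0$ the set $\Omega_{\varepsilon}=\{u>(1+\varepsilon)v\}$ is indeed compactly contained in $\Omega$, so the integral is finite; but the compact set depends on $\varepsilon$ and exhausts $\{u>v\}$, whose closure is allowed to meet $\partial\Omega$ (the hypothesis $\limsup u/v\leq 1$ is satisfied, for instance, by $u=v+1$ with $v\to\infty$ at the boundary). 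The estimates you invoke --- Proposition \ref{prop_grad_lp_bound} or \cite[Corollary 5.5]{diaz1993} --- are local: they give $\|\nabla v\|_{L^{q}(K)}\leq C(K)$ with $C(K)\to\infty$ as $K$ exhausts $\Omega$, since the gradient of a boundary blow-up solution is not $q$-integrable up to $\partial\Omega$. Your bound therefore reads $\varepsilon\,C(\varepsilon)$ with $C(\varepsilon)$ possibly unbounded, and nothing in the argument shows $\varepsilon\,C(\varepsilon)\to 0$; consequently the monotone-convergence step that forces both nonnegative integrals to vanish on $\{u>v\}$ is not justified.

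The standard ways to close this hole change the architecture of the proof. For the homogeneous case $Q(r)=r^{p-2}$ one adds the structural hypothesis that $t\mapsto f(t)/t^{p-1}$ is nondecreasing (automatic for $f(t)=t^{q}$, $q>p-1$); then $(1+\varepsilon)v$ is itself a weak supersolution, and testing the pair $u$, $(1+\varepsilon)v$ with $(u-(1+\varepsilon)v)^{+}$ on $\Omega_{\varepsilon}$ produces \emph{no} remainder: monotonicity forces $\Omega_{\varepsilon}=\emptyset$ directly, and one then lets $\varepsilon\to 0$. For a non-homogeneous $Q$ this scaling is unavailable, which is exactly why \cite{diaz1993} first quantifies the boundary behaviour before comparing; simply citing local gradient bounds cannot substitute for that. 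Two further, smaller points: the positivity $v>0$ (needed both for $\bigcup_{\varepsilon}\Omega_{\varepsilon}=\{u>v\}$ and for the ratio hypothesis to make sense) deserves an argument rather than an appeal to Hopf for a weak supersolution; and in your endgame the component on which $u-v\equiv c>0$ with relative boundary contained in $\partial\Omega$ is not excluded by the limsup hypothesis (which $u=v+c$ satisfies whenever $v$ blows up) but by observing that $f(t+c)=f(t)$ on the whole range of $v$ would force the increasing $f$ to be eventually constant, contradicting $f(r)\to\infty$.
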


\medskip 

In order to discuss the results for domains becoming unbounded in more than one direction, consider the infinite cylindrical domain $\tilde{S}(\omega)=\omega \times \re^{n-m}$, where $1\leq m<n$ and $\omega\subset \re^{m}$ is an open bounded domain with $C^{2}$ boundary. Let $\tilde{S}_{\ell}(\omega)$ be a $C^{2}$ bounded domain such that $\omega\times (-\ell,\ell)^{n-m}\subset \tilde{S}_{\ell}(\omega)$, and $\tilde{S}_{\ell}(\omega)\to \tilde{S}(\omega)$ as $\ell\to \infty$.
\begin{theorem}
    Let $f$ satisfy \ref{A4} and $w_{\ell}$ be a solution of \eqref{singular_bdry} on $\tilde{S}_{\ell}(\omega)$,
define $\displaystyle w(x)=\lim_{\ell\to\infty}w_{\ell}(x)$, for $x\in \tilde{S}(\omega)$, then
\begin{enumerate}
    \item The function $w\in W^{1,p}_{loc}(\tilde{S}(\omega))$ and $w_{\ell}\rightharpoonup w$ in $W^{1,p}_{loc}(\tilde{S}(\omega))$.
    \item The function $w$ solves \eqref{singular_bdry}
 on $S(\omega)$    
    \item Let $x=(x_{1},x_{2})$, where $x_{1}\in \omega$ and $x_{2}\in \re^{n-m}$. The solution $w$ is independent of $x_{2}$ variable. $w(x_{1},x_{2})$ solves \eqref{Eqn_crossection} on $\omega$
    for each fixed $x_{2}\in \re^{n-m}$.
\item In addition if $f$ satisfies \ref{A5}, then \eqref{singular_bdry} on $\tilde{S}(\omega)$ has unique solution.
\end{enumerate}
\end{theorem}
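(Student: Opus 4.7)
The plan is to mirror the argument of Section \ref{section 4} step by step, replacing the one-dimensional unbounded direction $\mathbb{R}$ by the $(n-m)$-dimensional unbounded direction $\mathbb{R}^{n-m}$; the structural steps remain valid because the comparison principle and the interior estimates of Section \ref{section 3} are dimension-independent.

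For part (1), first observe that $\{w_\ell(x)\}$ is a decreasing sequence for $\ell$ large enough that $x \in \tilde{S}_\ell(\omega)$: given $\ell_1 < \ell_2$, the function $w_{\ell_2}$ is finite on $\partial \tilde{S}_{\ell_1}(\omega)\cap \tilde{S}_{\ell_2}(\omega)$ while $w_{\ell_1}$ blows up there, so Proposition \ref{prop_comparison} yields $w_{\ell_2}\le w_{\ell_1}$ on $\tilde{S}_{\ell_1}(\omega)$. The pointwise limit $w$ therefore exists, is nonnegative, and by Proposition \ref{prop_weak_local_bound} applied on any ball $B_R(x_0)\subset\subset \tilde{S}(\omega)$, the family $\{w_\ell\}$ is locally uniformly bounded; by Proposition \ref{prop_grad_lp_bound}, $\{\nabla w_\ell\}$ is bounded in $L^p(K)$ for every compact $K\subset \tilde{S}(\omega)$. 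Reflexivity of $W^{1,p}(K)$ combined with pointwise convergence and a diagonal argument then gives $w\in W^{1,p}_{loc}(\tilde{S}(\omega))$ together with $w_\ell \rightharpoonup w$ in $W^{1,p}_{loc}(\tilde{S}(\omega))$.

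For part (2), the verification that $w$ is a weak solution of $\Delta_p w = f(w)$ on $\tilde{S}(\omega)$ is a verbatim copy of Theorem \ref{thrm_large solution existence}: choose a cutoff $\psi\in C_c^\infty(\tilde{S}_1)$ with $\psi\equiv 1$ on $\tilde S$, apply the vector inequality \eqref{vector inequlity}, split the test function $(w_\ell - w)\psi$ into the two pieces $I_1,I_2$, estimate $I_2$ by the $L^p$ convergence, and estimate $I_1$ by combining the dominated convergence theorem applied to $f(w_\ell)$ with the uniform $W^{1,p}$ bound on $(w_\ell - w)\psi$. This yields $w_\ell \to w$ in $W^{1,p}_{loc}(\tilde{S}(\omega))$, after which the passage to the limit in the weak formulation is straightforward. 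The blow-up boundary condition $w(x)\to\infty$ as $x \to \partial \tilde{S}(\omega)$ follows from the pointwise definition of $w$ and the blow-up of each $w_\ell$ at the corresponding piece of $\partial \tilde{S}_\ell(\omega)$.

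For part (3), the translation invariance is again established as in Theorem \ref{thrm_large solution on S}: for any $\tau = (0,\tau_2) \in \{0\}\times \mathbb{R}^{n-m}$, the shifted function $w_\tau(x) = w(x+\tau)$ solves the same equation on $\tilde{S}(\omega)$ with the same blow-up behavior at the (translation-invariant) boundary, so the comparison principle yields $w(x+\tau)\le w(x)$ and, using $-\tau$ symmetrically, equality. Hence $w(x_1,x_2)=v(x_1)$. To verify that $v$ solves \eqref{Eqn_crossection} on $\omega$, insert a product test function $\psi(x_1,x_2) = \phi(x_1)\prod_{i=1}^{n-m}\tilde{\phi}_i(x_{2,i})$ into the weak formulation on $\tilde{S}(\omega)$, where $\phi\in C_c^\infty(\omega)$ and each $\tilde{\phi}_i\in C_c^\infty(\mathbb{R})$ integrates to $1$; since $\nabla w = (\nabla_{x_1} v,0)$, the terms involving derivatives in the $x_2$ variables cancel upon integration by Fubini, and the remaining identity reads as the weak form of $\Delta_p v = f(v)$ on $\omega$. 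The blow-up condition on $\partial\omega$ is inherited from the blow-up of $w$ on $\partial \tilde{S}(\omega)$.

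For part (4), the uniqueness is immediate from Theorem \ref{thrm_uniqueness_bdd_domain}: any two solutions on $\tilde{S}(\omega)$ are translation invariant in $x_2$ by the argument of step (3), and each cross-sectional profile solves \eqref{Eqn_crossection} on $\omega$, which has a unique large solution under \ref{A5} and \ref{A4}. The main obstacle is the nonlinear passage to the limit in part (2); but since the cutoff/vector-inequality argument from Theorem \ref{thrm_large solution existence} uses only interior compactness and not the one-dimensional structure of the unbounded direction, it carries over without modification.
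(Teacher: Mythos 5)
Your proposal is correct and follows exactly the route the paper intends: the paper omits this proof, stating only that it is ``similar to the proofs in Section \ref{section 4}'', and your write-up is a faithful elaboration of that — monotonicity via Proposition \ref{prop_comparison}, local bounds via Propositions \ref{prop_weak_local_bound} and \ref{prop_grad_lp_bound}, the cutoff/vector-inequality argument of Theorem \ref{thrm_large solution existence} for the strong local $W^{1,p}$ convergence, product test functions for the cross-section, and Theorem \ref{thrm_uniqueness_bdd_domain} for uniqueness. The only caveat worth recording is that the strong-convergence step via \eqref{vector inequlity} inherits the restriction $p\geq 2$ from Theorem \ref{thrm_large solution existence}, a hypothesis not stated in the present theorem (nor addressed by the paper).
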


Apart from the usual technical generalization, the proof is similar to the proofs in Section \ref{section 4}, and we omit them here. 

 \section*{Acknowledgements}
I.C.  was supported by the DST-INDIA INSPIRE faculty fellowship (IFA22-MA187). N.N.D. is supported by PMRF grant (2302262).

\renewcommand\refname{Bibliography}
\bibliographystyle{abbrv}
\bibliography{ref.bib}

\end{document}